\newtheorem{theorem}{Theorem}[section]
\newtheorem{lemma}{Lemma}[section]
\newtheorem{remark}{Remark}[section]
\numberwithin{equation}{section}
\newenvironment{proof}{\medskip\par\noindent{\bf Proof.}\ }{\qquad
\raisebox{-0.5mm}{\rule{1.5mm}{4mm}}\vspace{6pt}}
\newcommand{\bbr}{\mathbb{R}}
\newcommand{\bbrn}{\mathbb{R}^3}
\newcommand{\h}{H^1(\bbrn)}
\newcommand{\bbn}{\mathbb{N}}
\begin{document}
\title
{\Large\bf On a Kirchhoff type problems with potential well and indefinite potential}%

\author{
Yuanze Wu$^{a},$\thanks{Corresponding
author. E-mail address: wuyz850306@cumt.edu.cn (Yuanze Wu)}\quad
Yisheng Huang$^{b},$\thanks{E-mail address: yishengh@suda.edu.cn(Yisheng Huang)}\quad
Zeng Liu$^{c}$\thanks{E-mail address: luckliuz@163.com(Zeng Liu).}\\%
\footnotesize$^{a}${\em  College of Sciences, China University of Mining and Technology, Xuzhou 221116, P.R. China }\\%
\footnotesize$^{b}${\em  Department of Mathematics, Soochow University, Suzhou 215006, P.R. China }\\%
\footnotesize$^{c}${\em  Department of Mathematics, Suzhou University of Science and Technology, Suzhou 215009, P.R. China}}%
\date{}
\maketitle

\date{} \maketitle

\noindent{\bf Abstract:} In this paper, we study the following Kirchhoff type problem:%
$$
\left\{\aligned&-\bigg(\alpha\int_{\bbr^3}|\nabla u|^2dx+1\bigg)\Delta u+(\lambda a(x)+a_0)u=|u|^{p-2}u&\text{ in }\bbr^3,\\%
&u\in\h,\endaligned\right.\eqno{(\mathcal{P}_{\alpha,\lambda})}%
$$
where $4<p<6$, $\alpha$ and $\lambda$ are two positive parameters, $a_0\in\bbr$ is a (possibly negative) constant and $a(x)\geq0$ is the potential well.
By the variational method, we investigate the existence of nontrivial solutions to $(\mathcal{P}_{\alpha,\lambda})$.  To our best knowledge, it is the first time that the nontrivial solution of the Kirchhoff type problem is found in the indefinite case.  We also obtain the concentration behaviors of the solutions as $\lambda\to+\infty$.%

\vspace{6mm} \noindent{\bf Keywords:} Kirchhoff type problem; Indefinite potential; Potential well; Variational method.%

\vspace{6mm}\noindent {\bf AMS} Subject Classification 2010: 35B38; 35B40; 35J10; 35J20.%

\section{Introduction}
In this paper, we will study the following Kirchhoff type problem:%
$$
\left\{\aligned&-\bigg(\alpha\int_{\bbr^3}|\nabla u|^2dx+1\bigg)\Delta u+(\lambda a(x)+a_0)u=|u|^{p-2}u&\text{ in }\bbr^3,\\%
&u\in\h,\endaligned\right.\eqno{(\mathcal{P}_{\alpha,\lambda})}%
$$
where $4<p<6$, $\alpha$ and $\lambda$ are two positive parameters, $a_0\in\bbr$ is a constant and $a(x)$ is a potential satisfying some conditions to be specified later.%

The Kirchhoff type problems in bounded domains is one of most popular nonlocal problems in the study areas of elliptic equations (cf. \cite{CKW11,CWL12,LLS14,LLT15,N14,N141,ZP06} and the references therein).  One motivation comes from the very important application to such problems in physics. Indeed, The Kirchhoff type problem in bounded domains is related to the stationary analogue of the following model:
\begin{equation}\label{eq991}
\left\{\aligned &u_{tt}-\bigg(\alpha\int_{\Omega}|\nabla u|^2dx+\beta\bigg)\Delta u=h(x,u)\quad\text{in }\Omega\times(0, T),\\
&u=0\quad\text{on }\partial\Omega\times(0, T),\\
&u(x,0)=u_0(x),\quad u_t(x,0)=u^*(x),\endaligned\right.
\end{equation}
where $T>0$ is a constant, $u_0, u^*$ are continuous functions.  Such model was first proposed by Kirchhoff in 1883 as an extension of the classical D'Alembert's wave equations for free vibration of elastic strings, Kirchhoff's model takes into account the changes in length of the string produced by transverse vibrations.  In \eqref{eq991}, $u$ denotes the displacement, $h(x,u)$ the external force and $\beta$ the initial tension while $\alpha$ is related to the intrinsic properties of the string (such as Young¡¯s modulus).  For more details on the physical background of Kirchhoff type problems, we refer the readers to \cite{A12,K83}.

The Kirchhoff type nonlocal term was introduced to the elliptic equations in $\bbr^3$ by He and Zou in \cite{HZ12}, where, by using the variational method, some existence results of the nontrivial solutions were obtained.  Since then, many papers have been devoted to such topic, see for example \cite{AF12,HLP14,LLS12,LY131,SW14,WHL15} and the references therein.  In particular, in a very recent paper \cite{SW14}, Sun and Wu have studied the following Kirchhoff type problem:
$$
\left\{\aligned&-\bigg(\mu\int_{\bbr^3}|\nabla u|^2dx+\nu\bigg)\Delta u+\lambda a(x)u=f(x,u)&\text{ in }\bbr^3,\\%
&u\in H^1(\bbr^N),\endaligned\right.%
$$
where $\mu,\nu,\lambda>0$ are parameters and $a(x)$ satisfies the following conditions:
\begin{enumerate}
\item[$(A_1)$] $a(x)\in C(\bbrn)$ and $a(x)\geq0$ on $\bbrn$.%
\item[$(A_2)$] There exists $a_\infty>0$ such that $|\mathcal{A}_\infty|<+\infty$, where $\mathcal{A}_\infty=\{x\in\bbr^3\mid a(x)<a_\infty\}$ and $|\mathcal{A}_\infty|$ is the Lebesgue measure of the set $\mathcal{A}_\infty$.%
\item[$(A_3)$] $\Omega=\text{int} a^{-1}(0)$ is a bounded domain and has smooth boundaries with $\overline{\Omega}=a^{-1}(0)$.%
\end{enumerate}
By using the variational method, they obtain some existence and non-existence results of the nontrivial solutions when $f(x,u)$ is $1$--asymptotically linear, $3$--asymptotically linear or $4$--asymptotically linear at infinity.

Under the conditions $(A_1)$--$(A_3)$, $\lambda a(x)$ is called as the steep potential well for $\lambda$ sufficiently large and the depth of the well is controlled by the parameter $\lambda$.  Such potentials were first introduced by Bartsch and Wang in \cite{BW95} for the scalar Schr\"odinger equations.  An interesting phenomenon for this kind of Schr\"odinger equations is that, one can expect to find the solutions which are concentrated at the bottom of the wells as the depth goes to infinity.  Due to this interesting property, such topic for the scalar Schr\"odinger equations was studied extensively in the past decade.  We refer the readers to \cite{BT13,DS07,LHL11,ST09,WZ09} and the references therein.  Recently, the steep potential well was also considered for some other elliptic equations and systems, see for example \cite{FSX10,GT121,JZ11,YT14,ZLZ13} and the references therein.  To our best knowledge, most of the literatures on this topic are devoted to the definite case while the indefinite case was only considered in \cite{BT13,DS07} for the the scalar Schr\"odinger equations and in \cite{ZLZ13} for the Schr\"odinger--Poisson systems.

Inspired by the above facts, we wonder what will happen for the Kirchhoff type problem with steep potential wells in the indefinite case of $a<0$?  To our best knowledge, this kind of  problems has not been studied yet in the literatures.  Thus, the purpose of this paper is to explore the preceding problems.

Before stating our results, we shall introduce some notations.  By the condition $(A_3)$, it is well known that in the case of $a_0\neq0$, all the eigenvalues $\{\gamma_i\}$ of the following problem
\begin{equation}\label{eq001}
-\Delta u=\gamma |a_0|u\quad u\in H_0^1(\Omega)
\end{equation}
satisfy $\gamma_1<\gamma_2<\gamma_3<\cdots<\gamma_i<\cdots$ with $\gamma_i\to+\infty$ as $i\to\infty$ and the multiplicity of $\gamma_i$ is finite for every $i\in\bbn$.  In particular, $\gamma_1$ is simple.  For each $i\in\bbn$, denote the corresponding eigenfunctions and the eigenspace of $\gamma_i$ by $\{\varphi_{i,j}\}_{j=1,2,\cdots,k_i}$ and $\mathcal{N}_i=$span$\{\varphi_{i,j}\}_{j=1,2,\cdots,k_i}$ respectively, where $k_i$ are the multiplicity of $\gamma_i$, then $\varphi_{i,j}$ can be chosen so that $\|\varphi_{i,j}\|_{L^2(\Omega)}=\frac{1}{|a_0|^2}$ and $\{\varphi_{i,j}\}$ can form a basis of $H_0^1(\Omega)$.  Let
\begin{equation}\label{eq9998}
k_0^*=\inf\{k\mid\gamma_k>1\},
\end{equation}
then our main result in this paper can be stated as follows.
\begin{theorem}\label{thm001}
Suppose that the conditions $(A_1)$--$(A_3)$ hold.  If either $a_0\geq0$ or $a_0<0$ with $\gamma_{k_0^*-1}<1$ then there exist positive constants $\alpha_*$ and $\Lambda_*$ such that $(\mathcal{P}_{\alpha, \lambda})$ has a nontrivial solution $u_{\alpha, \lambda}$ for all $\lambda>\Lambda_*$ and $\alpha\in(0 ,\alpha_*)$.  Moreover, $u_{\alpha,\lambda}\to u_{\alpha}$ strongly in $H^1(\bbr^3)$ as $\lambda\to+\infty$ up to a subsequence and $u_\alpha$ is a nontrivial solution of the following Kirchhoff type problem:
$$
\left\{\aligned-\bigg(\alpha\int_{\Omega}|\nabla u|^2dx+1\bigg)\Delta u+a_0 u&=|u|^{p-2}u&\quad\text{in }\Omega,\\
u&=0&\quad\text{on }\partial\Omega.\endaligned\right.\eqno{(\mathcal{P}_{\alpha}^*)}%
$$
\end{theorem}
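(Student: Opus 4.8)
\noindent\emph{Proof strategy.}
The plan is to produce $u_{\alpha,\lambda}$ as a linking--type critical point of the energy functional
$$
J_{\alpha,\lambda}(u)=\frac{\alpha}{4}\Big(\int_{\bbr^3}|\nabla u|^2dx\Big)^2+\frac12 B_\lambda(u)-\frac1p\int_{\bbr^3}|u|^pdx,\qquad B_\lambda(u):=\int_{\bbr^3}\big(|\nabla u|^2+(\lambda a(x)+a_0)u^2\big)dx,
$$
on the Hilbert space $E:=\{u\in\h:\int_{\bbr^3}a(x)u^2dx<\infty\}$ with the $\lambda$--dependent norm $\|u\|_\lambda^2=\int_{\bbr^3}(|\nabla u|^2+(\lambda a(x)+1)u^2)dx$. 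Standard arguments give $J_{\alpha,\lambda}\in C^1(E,\bbr)$ and identify its critical points with the weak solutions of $(\mathcal{P}_{\alpha,\lambda})$; by $(A_2)$ one also has, for $\lambda$ large, the compact embeddings $E\hookrightarrow L^q(\bbr^3)$ for $2\le q<6$. The first real step is a spectral splitting: analysing the eigenvalue problem $-\Delta\phi+(\lambda a(x)+a_0)\phi=\mu\phi$ and letting $\lambda\to+\infty$ in the spirit of Bartsch--Wang, the negative eigenvalues converge to $|a_0|(\gamma_i-1)$ with $i<k_0^*$; since $\gamma_{k_0^*-1}<1<\gamma_{k_0^*}$, the number $d:=\sum_{i<k_0^*}k_i$ of negative eigenvalues and the spectral gap stabilize, so for $\lambda\ge\Lambda_0$ there is a $B_\lambda$--orthogonal splitting $E=E_\lambda^-\oplus E_\lambda^+$ with $\dim E_\lambda^-=d$, $B_\lambda\le-c_0\|\cdot\|_\lambda^2$ on $E_\lambda^-$ and $B_\lambda\ge c_0\|\cdot\|_\lambda^2$ on $E_\lambda^+$, the constant $c_0>0$ being independent of $\lambda$ (when $a_0\ge0$, $d=0$ and $E=E_\lambda^+$, so the scheme below degenerates to mountain pass).

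Next I set up the linking geometry. From $B_\lambda\ge c_0\|\cdot\|_\lambda^2$ on $E_\lambda^+$ and the Sobolev inequality, $J_{\alpha,\lambda}(u)\ge\tfrac{c_0}{2}\|u\|_\lambda^2-C\|u\|_\lambda^p$ on $E_\lambda^+$, whence $\inf\{J_{\alpha,\lambda}(u):u\in E_\lambda^+,\ \|u\|_\lambda=\rho\}\ge\kappa>0$ for suitable $\rho,\kappa$ independent of $\lambda$ (here the Kirchhoff term only helps). Fix $e\in E_\lambda^+$ with $\|e\|_\lambda=1$ and put $Q_R=(\overline{B_R}\cap E_\lambda^-)\oplus[0,R]e$. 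On the face $\{s=0\}$, i.e. on $E_\lambda^-$, one has $J_{\alpha,\lambda}(v)\le\tfrac{\alpha}{4}(\int|\nabla v|^2)^2-\tfrac{c_0}{2}\|v\|_\lambda^2-\tfrac1p\|v\|_p^p$; since $E_\lambda^-$ is finite dimensional and $p>4$, a direct one--variable estimate shows that for $\alpha$ below a threshold $\alpha_*$ the right side is $\le0$ for all $v$ (the large part of the quartic term is beaten by $-\tfrac1p\|v\|_p^p$, the small part by $-\tfrac{c_0}{2}\|v\|_\lambda^2$) --- this is exactly where $\alpha_*$ enters. On the remaining faces of $\partial Q_R$ the point has $\lambda$--norm comparable to $R$ inside the finite--dimensional space $E_\lambda^-\oplus\bbr e$, so $J_{\alpha,\lambda}\le CR^4-\tfrac{c}{p}R^p\to-\infty$ as $R\to\infty$ because $p>4$; hence $J_{\alpha,\lambda}\le0$ on $\partial Q_R$ for $R=R(\alpha,\lambda)$ large. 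Since $E_\lambda^-$ is finite dimensional, the classical linking theorem (with the Cerami condition) applies and yields a Cerami sequence $\{u_n\}\subset E$ at a level $c_{\alpha,\lambda}\in[\kappa,\sup_{Q_R}J_{\alpha,\lambda}]$.

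For compactness one first shows $\{u_n\}$ is bounded in $(E,\|\cdot\|_\lambda)$, exploiting $p>4$, the positivity of the Kirchhoff term and the finite dimension of $E_\lambda^-$ to absorb the indefinite part; passing to a subsequence, $u_n\rightharpoonup u_{\alpha,\lambda}$ in $E$ and $u_n\to u_{\alpha,\lambda}$ in $L^q(\bbr^3)$ for $2\le q<6$ by the compact embedding, and since $\alpha\|\nabla u_n\|_2^2+1$ stays bounded, testing $J_{\alpha,\lambda}'(u_n)$ with $u_n-u_{\alpha,\lambda}$ upgrades this to strong convergence in $E$. Thus $u_{\alpha,\lambda}$ is a critical point with $J_{\alpha,\lambda}(u_{\alpha,\lambda})=c_{\alpha,\lambda}\ge\kappa>0$, so it is a nontrivial solution of $(\mathcal{P}_{\alpha,\lambda})$ for all $\lambda>\Lambda_*$ (the largest of the thresholds above) and $\alpha\in(0,\alpha_*)$.

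Finally, for the concentration statement fix $\alpha\in(0,\alpha_*)$, take $\lambda_n\to+\infty$, and write $u_n=u_{\alpha,\lambda_n}$. The main obstacle I anticipate is a uniform bound $c_{\alpha,\lambda_n}\le M<\infty$: this should be obtained by transplanting the linking set into $H_0^1(\Omega)$, using that $a\equiv0$ on $\Omega$ implies $J_{\alpha,\lambda}\big|_{H_0^1(\Omega)}=J_\alpha^*$, and that $E_\lambda^-$ converges as $\lambda\to+\infty$ to $\bigoplus_{i<k_0^*}\mathcal{N}_i\subset H_0^1(\Omega)$, so one can build a comparison box on which $J_{\alpha,\lambda}$ is controlled by the linking value of $J_\alpha^*$ plus $o(1)$. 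Granting this, $\{u_n\}$ is bounded in $\h$ and $\int_{\bbr^3}\lambda_n a(x)u_n^2dx$ is bounded; along a subsequence $u_n\rightharpoonup u_\alpha$ in $\h$, and $\int_{\bbr^3}a(x)u_\alpha^2dx=0$ forces $u_\alpha\in H_0^1(\Omega)$ by $(A_3)$. Passing to the limit in the weak formulation shows $u_\alpha$ solves $(\mathcal{P}_{\alpha}^*)$; a further test of $J_{\alpha,\lambda_n}'(u_n)$ with $u_n-u_\alpha$, together with the (local) compact embeddings and the boundedness of $\alpha\|\nabla u_n\|_2^2+1$, gives $u_n\to u_\alpha$ strongly in $\h$; and then $J_\alpha^*(u_\alpha)=\lim_n c_{\alpha,\lambda_n}\ge\kappa>0$ forces $u_\alpha\neq0$, which finishes the proof.
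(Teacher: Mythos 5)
Your overall route is the paper's route: for $\lambda$ large, split the space into a finite--dimensional part on which the quadratic form $\int(|\nabla u|^2+(\lambda a+a_0)u^2)dx$ is negative definite plus a complementary part on which it is positive definite (the paper builds this splitting through the constrained minimization levels $\beta_i(\lambda)\to\gamma_i$ of Lemmas~\ref{lem001}--\ref{lem005} rather than through the spectrum of $-\Delta+\lambda a+a_0$, but it is the same decomposition), then run a linking argument in which smallness of $\alpha$ tames the quartic Kirchhoff term on the finite--dimensional faces, extract a bounded Cerami sequence using $p>4$, and finally let $\lambda\to+\infty$. The genuine gap is your compactness claim: under $(A_1)$--$(A_2)$ the embedding $E_\lambda\hookrightarrow L^q(\bbr^3)$, $2\le q<6$, is \emph{not} compact for any fixed $\lambda$, however large. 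Condition $(A_2)$ only says $|\mathcal{A}_\infty|<+\infty$; take, e.g., $a$ bounded with $a\equiv a_\infty$ outside a compact set (admissible under $(A_1)$--$(A_3)$) and translate a fixed bump to infinity: the sequence is bounded in $E_\lambda$, converges weakly to $0$, yet retains a fixed $L^2$ and $L^p$ norm. Consequently your steps ``$u_n\to u_{\alpha,\lambda}$ in $L^q$ by the compact embedding'', the ensuing strong convergence of the Cerami sequence, and the nontriviality of the weak limit all rest on a false premise. The paper replaces compactness by the quantitative argument in the proof of Lemma~\ref{lem009}: if the weak limit were $0$, then $u_n\to0$ in $L^2(\mathcal{A}_\infty)$, and interpolation plus Sobolev give $\|u_n\|_{L^p(\bbr^3)}^p\le C\big(\tfrac{1}{a_0+a_\infty\lambda}\big)^{\frac{6-p}{4}}\|u_n\|_\lambda^2+o_n(1)$ together with $\mathcal{D}_\lambda(u_n,u_n)=o_n(1)$; for $\lambda$ beyond a second, $\alpha$--dependent threshold this forces $u_n\to0$ in $E_\lambda$, contradicting $c_{\alpha,\lambda}\ge C_\alpha>0$, and the same estimate applied to $u_n-u_{\alpha,\lambda}$ yields the strong convergence. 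This is precisely where the hypothesis $\lambda>\Lambda_*$ does work beyond the spectral splitting, and your sketch has no substitute for it.

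A secondary weak point concerns the concentration step, where you need $c_{\alpha,\lambda}$ bounded above uniformly in $\lambda$ so that $\|u_{\alpha,\lambda}\|_\lambda$ is uniformly bounded. You allow the linking radius $R=R(\alpha,\lambda)$ to depend on $\lambda$ and then propose to repair the resulting loss of uniformity by a ``comparison box'' transplanted near $H_0^1(\Omega)$; you correctly flag this as the main obstacle, but it is left as a sketch of what is really the crux. The paper avoids the issue from the start: choosing the linking set with $e_{k_0^*}(\lambda)$ and using the convergence $(e_i(\lambda),\beta_i(\lambda))\to(\varphi_{i,j},\gamma_i)$, it shows (Lemmas~\ref{lem006} and \ref{lem007}) that $\rho$, $R_0$, $d_0$ and $\alpha_0$ can be taken independent of $\lambda$ --- the key point being a uniform lower bound for $\|u\|_{L^p(\bbr^3)}$ on the relevant finite--dimensional unit sphere --- whence $c_{\alpha,\lambda}\in[C_\alpha,C'_\alpha]$ with $\lambda$--free bounds and the boundedness needed in Lemma~\ref{lem010} follows at once. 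If you keep your formulation you must prove this uniformity explicitly; otherwise $\sup_{Q_R}J_{\alpha,\lambda}$, and with it the minimax level, could a priori blow up as $\lambda\to+\infty$.
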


\begin{remark}
\begin{enumerate}{\em
\item[$(a)$]  If $a_0<0$ with $|a_0|$ large enough then it is easy to see that $k_0^*>1$.  It follows that $(\mathcal{P}_{\alpha,\lambda})$ is indefinite in a suitable Hilbert space (see Lemma~\ref{lem005} for more details).  To out best knowledge, Theorem~\ref{thm001} is the first result for the Kirchhoff type problem in $\bbr^3$ for the indefinite case.
\item[$(b)$]  Theorem~\ref{thm001} also gives the existence of nontrivial solutions to $(\mathcal{P}_\alpha^*)$.  Note that $(\mathcal{P}_\alpha^*)$ is also indefinite if $a_0<0$ with $|a_0|$ large enough.  Thus, to our best knowledge, it is also the first result for the Kirchhoff type problem on bounded domains in the indefinite case.}
 \end{enumerate}
\end{remark}

Through this paper, $C$ and $C_i(i=1,2,\cdots)$ will be indiscriminately used to denote various positive constants.  $o_n(1)$ and $o_\lambda(1)$ will always denote the quantities tending towards zero as $n\to\infty$ and $\lambda\to+\infty$ respecitvely.

\section{The variational setting}
By the condition $(A_1)$, we see that for every $a_0\in\bbr$ and $\lambda>\max\{0, \frac{-a_0}{a_\infty}\}$,
\begin{equation*}
E=\{u\in D^{1,2}(\bbr^3)\mid\int_{\bbrn}a(x)u^2dx<+\infty\}
\end{equation*}
equipped with the following inner product%
\begin{equation*}
\langle u,v\rangle_{\lambda}=\int_{\bbr^3}(\nabla u\nabla v+(\lambda a(x)+a_0)^+uv)dx%
\end{equation*}
is a Hilbert space, which we will denote by $E_{\lambda}$,
where $(\lambda a(x)+a_0)^+=\max\{\lambda a(x)+a_0, 0\}$.  The corresponding norm on $E_\lambda$ is given by%
\begin{equation*}
\|u\|_{\lambda}=\bigg(\int_{\bbr^3}(|\nabla u|^2+(\lambda a(x)+a_0)^+u^2)dx\bigg)^{\frac12}.%
\end{equation*}
It follows from the H\"older inequality, the Sobolev inequality and the conditions $(A_1)$--$(A_2)$ that for every $u\in E_\lambda$ with $\lambda>\max\{0, \frac{-a_0}{a_\infty}\}$,%
\begin{eqnarray}
\int_{\bbr^3}u^2dx&=&\int_{\mathcal{A}_\infty}u^2dx+\int_{\bbr^3\backslash\mathcal{A}_\infty}u^2dx\notag\\
&\leq&|\mathcal{A}_\infty|^\frac{2}{3}S^{-1}\int_{\bbr^3}|\nabla u|^2dx+\frac{1}{a_0+a_\infty\lambda}\int_{\bbr^3}(\lambda a(x)+a_0)^+u^2dx\notag\\
&\leq&\max\{|\mathcal{A}_\infty|^\frac{2}{3}S^{-1}, \frac{1}{a_0+a_\infty\lambda}\}\int_{\bbr^3}(|\nabla u|^2+(\lambda a(x)+a_0)^+u^2)dx\notag
\end{eqnarray}
and
\begin{eqnarray}
(\int_{\bbr^3}|u|^pdx)^{\frac1p}&\leq& S_p^{-\frac{1}{2}}\bigg(\int_{\bbr^3}(|\nabla u|^2+u^2)dx\bigg)^{\frac12}\notag\\
&\leq&S_p^{-\frac{1}{2}}\sqrt{1+\max\{|\mathcal{A}_\infty|^\frac{2}{3}S^{-1}, \frac{1}{a_0+a_\infty\lambda}\}}\bigg(\int_{\bbr^3}(|\nabla u|^2+(\lambda a(x)+a_0)^+u^2)dx\bigg)^{\frac12},\notag
\end{eqnarray}
where $S$ and $S_p$ are the best Sobolev embedding constant from $D^{1,2}(\bbr^3)$ to $L^6(\bbr^3)$ and $H^1(\bbr^3)$ to $L^{p}(\bbr^3)$ respectively, that is,%
\begin{equation*}
S=\inf\{\|\nabla u\|_{L^2(\bbr^3)}^2 \mid u\in D^{1,2}(\bbr^3), \|u\|_{L^{6}(\bbr^3)}^2=1\}%
\end{equation*}
and
\begin{equation*}
S_p=\inf\{\|\nabla u\|_{L^2(\bbr^3)}^2+\|u\|_{L^2(\bbr^3)}^2 \mid u\in H^{1}(\bbr^3), \|u\|_{L^{p}(\bbr^3)}^2=1\},%
\end{equation*}
where $\|\cdot\|_{L^p(\bbr^3)}$ is the usual norm in $L^p(\bbr^3)$ for all $p\geq1$.

Let $d_\lambda=\sqrt{\max\{|\mathcal{A}_\infty|^\frac{2}{3}S^{-1}, \frac{1}{a_0+a_\infty\lambda}\}}$, then we have
\begin{equation}\label{eq0001}%
\|u\|_{L^2(\bbr^N)}\leq d_\lambda\|u\|_{\lambda}\quad\text{and}\quad \|u\|_{L^p(\bbr^3)}\leq S_p^{-\frac{1}{2}}\sqrt{1+d_\lambda^2}\|u\|_{\lambda},%
\end{equation}
which yields that $E_\lambda$ is embedded continuously into $\h$ for $\lambda>\max\{0, \frac{-a_0}{a_\infty}\}$.
Moreover, by using \eqref{eq0001}, the conditions $(A_1)$--$(A_2)$ and by following a standard argument, we can show that corresponding energy functional $J_{\alpha, \lambda}(u)$ to the Problem~$(\mathcal{P}_{\alpha,\lambda})$, given by%
\begin{eqnarray*}\label{eq0131}
J_{\alpha,\lambda}(u)=\frac\alpha4\|\nabla u\|_{L^2(\bbr^3)}^4+\frac12\int_{\bbr^3}(|\nabla u|^2+(\lambda a(x)+a_0)u^2)dx-\frac{1}{p}\|u\|_{L^{p}(\bbr^3)}^{p},%
\end{eqnarray*}
is $C^2$ in $E_\lambda$  for $\lambda>\max\{0, \frac{-a_0}{a_\infty}\}$.  For the sake of convenience, we re-write the energy functional $J_{\lambda}(u)$ by%
\begin{eqnarray*}
J_{\alpha,\lambda}(u)=\frac\alpha4\|\nabla u\|_{L^2(\bbr^3)}^4+\frac12\|u\|_\lambda^2-\frac12\mathcal{D}_{\lambda}(u,u)-\frac{1}{p}\|u\|_{L^{p}(\bbr^3)}^{p},%
\end{eqnarray*}
where $\mathcal{D}_{\lambda}(u,v)=\int_{\bbr^3}(\lambda a(x)+a_0)^-uvdx$ and $(\lambda a(x)+a_0)^-=\max\{-(\lambda a(x)+a_0), 0\}$.  In what follows, inspired by \cite{DS07,ZLZ13}, we shall make some further observations on the functional $\mathcal{D}_{\lambda}(u,u)$.

By the condition $(A_1)$, $\int_{\bbr^3}(\lambda a(x)+a_0)u^2dx\geq0$ for all $u\in E_\lambda$ with $\lambda>0$ in the case of $a_0\geq0$.  It follows that $\mathcal{D}_{\lambda}(u,u)$ is definite on $E_\lambda$ with $\lambda>0$ in the case of $a_0\geq0$.  Let us consider the case of $a_0<0$ in what follows.  Let%
\begin{equation*}
\mathcal{A}_\lambda:=\{x\in\bbr^3\mid \lambda a(x)+a_0<0\},%
\end{equation*}
then by the condition $(A_3)$, we have $\Omega\subset\mathcal{A}_\lambda$, which means that $\mathcal{A}_\lambda\not=\emptyset$ for every $\lambda>0$, and moreover, by the conditions $(A_1)$--$(A_2)$, the real number
\begin{equation*}
\Lambda_{0}:=\inf\{\lambda>0\mid|\mathcal{A}_\lambda|<+\infty\}.%
\end{equation*}
satisfies $0<\Lambda_{0}\leq\frac{-a_0}{a_\infty}$.  For $\lambda>\Lambda_{0}$, we define%
\begin{equation*}
\mathcal{F}_{\lambda}:=\{u\in E_{\lambda}\mid\text{supp}u\subset\bbr^3\backslash\mathcal{A}_\lambda\}.%
\end{equation*}
It follows from the conditions $(A_1)$--$(A_3)$ that $\mathcal{F}_{\lambda}$ is nonempty, closed and convex with $\mathcal{F}_{\lambda}\not=E_{\lambda}$.  Hence, $E_{\lambda}=\mathcal{F}_{\lambda}\oplus\mathcal{F}_{\lambda}^\perp$ and $\mathcal{F}_{\lambda}^\perp\not=\emptyset$ for $\lambda>\Lambda_{0}$ in the case of $a_0<0$, where $\mathcal{F}_{\lambda}^\perp$ is the orthogonal complement of $\mathcal{F}_{\lambda}$ in $E_{\lambda}$.
\begin{lemma}\label{lem001}
Let
\begin{equation*}
\beta(\lambda):=\inf_{u\in\mathcal{F}_{\lambda}^\perp\cap\mathbb{D}_\lambda}\|u\|_\lambda^2,
\end{equation*}
where $\mathbb{D}_\lambda:=\{u\in E_\lambda\mid \mathcal{D}_\lambda(u,u)=1\}$.
If the conditions $(A_1)$--$(A_3)$ hold, then $\beta(\lambda)$ is nondecreasing as the function of $\lambda$ on $(\Lambda_0, +\infty)$ and $\beta(\lambda)$ can be attained by some $e(\lambda)\in\mathcal{F}_{\lambda}^\perp$.  Furthermore, $(e(\lambda), \beta(\lambda))\to (\varphi_1,\gamma_1)$ strongly in $\h\times\bbr$ as $\lambda\to+\infty$ up to a subsequence.
\end{lemma}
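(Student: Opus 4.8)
\emph{Strategy.} The plan is to establish, in the order stated, the monotonicity of $\beta$, the attainment of $\beta(\lambda)$, and the limit behaviour; throughout, the (zero–extended) first eigenfunction $\varphi_1$ of \eqref{eq001} will serve both as the test function giving the a priori bound $\beta(\lambda)\le\gamma_1$ and as the model for the concentration limit. Since $a\ge0$, the function $t\mapsto(ta(x)+a_0)^-$ is nonincreasing and $t\mapsto(ta(x)+a_0)^+$ is nondecreasing, so for $\Lambda_0<\lambda_1\le\lambda_2$ one has $\mathcal A_{\lambda_2}\subseteq\mathcal A_{\lambda_1}$, $\mathcal F_{\lambda_1}\subseteq\mathcal F_{\lambda_2}$, $\mathcal D_{\lambda_1}(u,u)\ge\mathcal D_{\lambda_2}(u,u)$ and $\|u\|_{\lambda_1}\le\|u\|_{\lambda_2}$ for every $u$; moreover $\overline\Omega=a^{-1}(0)\subset\mathcal A_\lambda$, so $(\lambda a+a_0)^+\equiv0$ and $(\lambda a+a_0)^-\equiv|a_0|$ on $\overline\Omega$. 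For the monotonicity, given $\ve>0$ pick $u\in\mathcal F_{\lambda_2}^\perp\cap\mathbb D_{\lambda_2}$ with $\|u\|_{\lambda_2}^2\le\beta(\lambda_2)+\ve$ and decompose it $\langle\cdot,\cdot\rangle_{\lambda_1}$–orthogonally as $u=v+r$ with $v\in\mathcal F_{\lambda_1}^\perp$, $r\in\mathcal F_{\lambda_1}$; since $r$ vanishes on $\mathcal A_{\lambda_1}$ and $(\lambda_1a+a_0)^-$ is supported there, $\mathcal D_{\lambda_1}(v,v)=\mathcal D_{\lambda_1}(u,u)\ge\mathcal D_{\lambda_2}(u,u)=1$ while $\|v\|_{\lambda_1}^2\le\|u\|_{\lambda_1}^2\le\|u\|_{\lambda_2}^2\le\beta(\lambda_2)+\ve$, so normalizing $v$ in $\mathbb D_{\lambda_1}$ gives $\beta(\lambda_1)\le\beta(\lambda_2)+\ve$, and $\ve\to0^+$ yields the claim.

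\emph{Attainment.} By \eqref{eq0001}, $\mathcal D_\lambda(u,u)\le|a_0|\|u\|_{L^2(\bbr^3)}^2\le|a_0|d_\lambda^2\|u\|_\lambda^2$, so $\beta(\lambda)>0$; and a $\varphi_1$ normalized so that $\mathcal D_\lambda(\varphi_1,\varphi_1)=1$ lies in $\mathcal F_\lambda^\perp\cap\mathbb D_\lambda$ (it is $\langle\cdot,\cdot\rangle_\lambda$–orthogonal to every function supported off $\mathcal A_\lambda$ because $\text{supp}\,\varphi_1\subset\overline\Omega\subset\mathcal A_\lambda$) with $\|\varphi_1\|_\lambda^2=\int_\Omega|\nabla\varphi_1|^2dx=\gamma_1$, so $\beta(\lambda)\le\gamma_1$. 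A minimizing sequence $\{u_n\}\subset\mathcal F_\lambda^\perp\cap\mathbb D_\lambda$ is bounded in $E_\lambda$, hence up to a subsequence $u_n\rightharpoonup e(\lambda)$ in $E_\lambda$ and in $\h$, with $e(\lambda)\in\mathcal F_\lambda^\perp$ by weak closedness of that subspace; since $|\mathcal A_\lambda|<+\infty$, splitting $\mathcal A_\lambda$ into $\mathcal A_\lambda\cap B_R$ and $\mathcal A_\lambda\setminus B_R$ and combining Rellich on the ball with the Sobolev inequality on the small–measure remainder gives $u_n\to e(\lambda)$ in $L^2(\mathcal A_\lambda)$, whence $\mathcal D_\lambda(e(\lambda),e(\lambda))=\lim_n\mathcal D_\lambda(u_n,u_n)=1$; weak lower semicontinuity of $\|\cdot\|_\lambda$ then forces $\|e(\lambda)\|_\lambda^2=\beta(\lambda)$.

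\emph{Concentration as $\lambda\to+\infty$.} By the above, $\beta(\lambda)$ is nondecreasing and $\le\gamma_1$, so $\beta(\lambda)\to\beta_\infty\le\gamma_1$; since $\|e(\lambda)\|_\lambda^2=\beta(\lambda)\le\gamma_1$ and $d_\lambda$ is bounded for large $\lambda$, $\{e(\lambda)\}$ is bounded in $\h$ and, up to a subsequence, $e(\lambda)\rightharpoonup e_0$ in $\h$. Minimizing $\|\cdot\|_\lambda^2$ on $\mathcal F_\lambda^\perp$ under $\mathcal D_\lambda(\cdot,\cdot)=1$, the Lagrange rule gives $\langle e(\lambda),\phi\rangle_\lambda=\beta(\lambda)\mathcal D_\lambda(e(\lambda),\phi)$ for all $\phi\in\mathcal F_\lambda^\perp$ (testing with $\phi=e(\lambda)$ identifies the multiplier as $\beta(\lambda)$). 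From $(\lambda a+a_0)^+\ge\lambda a-|a_0|$ and $\|e(\lambda)\|_\lambda^2\le\gamma_1$ one gets $\lambda\int_{\bbr^3}a\,e(\lambda)^2dx\le C$, so Fatou yields $\int_{\bbr^3}a\,e_0^2dx=0$, i.e. $e_0=0$ a.e. off $\overline\Omega$ and $e_0\in H_0^1(\Omega)$. Using $H_0^1(\Omega)\subset\mathcal F_\lambda^\perp$ as test functions, together with $(\lambda a+a_0)^+\equiv0$, $(\lambda a+a_0)^-\equiv|a_0|$ on $\overline\Omega$ and $|\mathcal A_\lambda|\to|\Omega|$, one passes to the limit in the identity and in $\mathcal D_\lambda(e(\lambda),e(\lambda))=1$ to obtain $\int_\Omega\nabla e_0\nabla\phi\,dx=\beta_\infty|a_0|\int_\Omega e_0\phi\,dx$ for all $\phi\in H_0^1(\Omega)$ and $|a_0|\int_\Omega e_0^2dx=1$; hence $e_0\neq0$ solves \eqref{eq001} with $\gamma=\beta_\infty$, which (as $\beta_\infty\le\gamma_1$ and $\gamma_1$ is simple) forces $\beta_\infty=\gamma_1$ and $e_0=\varphi_1$ up to sign. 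Finally, $\int_\Omega|\nabla e_0|^2dx=\gamma_1$ and weak lower semicontinuity give $\gamma_1\le\liminf\int_{\bbr^3}|\nabla e(\lambda)|^2dx\le\limsup\int_{\bbr^3}|\nabla e(\lambda)|^2dx\le\lim\beta(\lambda)=\gamma_1$, so $\int_{\bbr^3}|\nabla e(\lambda)|^2dx\to\|\nabla e_0\|_{L^2(\bbr^3)}^2$ and $\int_{\bbr^3}(\lambda a+a_0)^+e(\lambda)^2dx\to0$; the latter, via $a\ge a_\infty$ off $\mathcal A_\infty$ (condition $(A_2)$), controls $\int_{\bbr^3\setminus\mathcal A_\infty}e(\lambda)^2dx$, while $e(\lambda)\to e_0$ in $L^2(\mathcal A_\infty)$ (finite measure), so $\|e(\lambda)\|_{L^2(\bbr^3)}\to\|e_0\|_{L^2(\bbr^3)}$; combined with the gradient convergence this gives $e(\lambda)\to e_0$ strongly in $\h$, i.e. $(e(\lambda),\beta(\lambda))\to(\varphi_1,\gamma_1)$ (after fixing the sign of $\varphi_1$).

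\emph{Main difficulty.} The crux is the last step: proving the concentration $e_0\in H_0^1(\Omega)$ and, above all, upgrading weak to strong convergence of $e(\lambda)$ in $\h$. Both rely on the sharp bound $\beta(\lambda)\le\gamma_1$ supplied by the competitor $\varphi_1$ and on condition $(A_2)$, which is exactly what prevents the $L^2$–mass of $e(\lambda)$ from escaping to infinity or accumulating on the shrinking set $\mathcal A_\lambda\setminus\overline\Omega$; one must also keep in mind that the Euler–Lagrange identity holds only against $\mathcal F_\lambda^\perp$, which is precisely why $H_0^1(\Omega)\subset\mathcal F_\lambda^\perp$ is the right class of test functions to pass to the limit.
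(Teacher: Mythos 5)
Your proof is correct, and its overall skeleton is the paper's: use $H_0^1(\Omega)\subset\mathcal{F}_\lambda^\perp$ (via $(A_3)$) to get the sharp competitor bound $\beta(\lambda)\le\gamma_1$, show the limit function concentrates in $\Omega$, identify the limit pair with $(\varphi_1,\gamma_1)$, and upgrade to strong $\h$ convergence through convergence of the Dirichlet norms. The differences are in two steps. For monotonicity, the paper argues directly from the inclusions $\mathcal{F}_{\lambda_2}\subset\mathcal{F}_{\lambda_1}$ and hence $\mathcal{F}_{\lambda_1}^{\perp}\subset\mathcal{F}_{\lambda_2}^{\perp}$ (for $\lambda_1\ge\lambda_2$), glossing over the fact that the two orthogonal complements are taken with respect to different inner products; your $\ve$-minimizer plus $\langle\cdot,\cdot\rangle_{\lambda_1}$-orthogonal projection onto $\mathcal{F}_{\lambda_1}^\perp$, using that $\mathcal{D}_{\lambda_1}$ does not see the $\mathcal{F}_{\lambda_1}$-component, sidesteps this and is the more robust version of the same monotone comparison of the two quadratic forms. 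For the identification of the limit, the paper uses the Rayleigh-quotient squeeze \eqref{eq1002} ($\gamma_1\ge\|e(\lambda)\|_\lambda^2\ge\int_\Omega|\nabla e^*|^2+o_\lambda(1)\ge\gamma_1+o_\lambda(1)$), whereas you derive the Euler--Lagrange identity $\langle e(\lambda),\phi\rangle_\lambda=\beta(\lambda)\mathcal{D}_\lambda(e(\lambda),\phi)$ on $\mathcal{F}_\lambda^\perp$, pass to the limit against $\phi\in H_0^1(\Omega)$, and invoke simplicity of $\gamma_1$ in \eqref{eq001}; this is exactly the multiplier argument the paper defers to Step~3 of Lemma~\ref{lem002}, so nothing is lost, and you additionally spell out the ``standard argument'' for attainment (finite measure of $\mathcal{A}_\lambda$, Rellich on balls plus a small-measure tail) that the paper only cites. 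Two cosmetic remarks: the convergence $e_0=\pm\varphi_1$ is only up to the sign (as implicitly in the paper), and your normalization $|a_0|\int_\Omega\varphi_1^2\,dx=1$, i.e. $\mathcal{D}_\lambda(\varphi_1,\varphi_1)=1$, is the internally consistent one for the constraint limit, differing from the paper's stated normalization $\|\varphi_{i,j}\|_{L^2(\Omega)}=|a_0|^{-2}$ only by a harmless constant.
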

\begin{proof}First,
thanks to the definition of $\Lambda_0$, we see that $\mathcal{D}_\lambda(u,u)$ and $\|u\|_\lambda^2$ are weakly continuous and weakly low semi-continuous on $\mathcal{F}_{\lambda}^\perp$ respecitvely.  Thus, we can use a standard argument to show that $\beta(\lambda)$ can be attained by some $e(\lambda)\in\mathcal{F}_{\lambda}^\perp\cap\mathbb{D}_\lambda$ for all $\lambda>\Lambda_0$.

\noindent Next, we will show that $\beta(\lambda)$ is nondecreasing as the function of $\lambda$ on $(\Lambda_0, +\infty)$.  Indeed, let $\lambda_1\geq\lambda_2$, then by the definition of $E_\lambda$, we have $E_{\lambda_1}=E_{\lambda_2}$ in the sense of sets.  It follows that $\mathcal{F}_{\lambda_2}\subset\mathcal{F}_{\lambda_1}$, which implies $\mathcal{F}_{\lambda_1}^{\perp}\subset\mathcal{F}_{\lambda_2}^{\perp}$.  Note that $\|u\|_{\lambda_1}^2\geq\|u\|_{\lambda_2}^2$ and $\mathcal{D}_{\lambda_1}(u,u)\leq\mathcal{D}_{\lambda_2}(u,u)$ for all $u\in E_{\lambda_1}$ by the condition $(A_1)$.  Thus, due to the definition of $\beta(\lambda_1)$ and $\beta(\lambda_2)$, we can see that $\beta(\lambda_2)\leq\beta(\lambda_1)$, that is, $\beta(\lambda)$ is nondecreasing as a functional of $\lambda$ on $(\Lambda_0, +\infty)$.

\noindent Finally, we shall prove that $(e(\lambda), \beta(\lambda))\to (\varphi_1,\gamma_1)$ strongly in $\h\times\bbr$ as $\lambda\to+\infty$ up to a subsequence.
In fact, since $\int_{\bbr^3}(\lambda a(x)+a_0)^-[e(\lambda)]^2dx=1$, it implies that
\begin{eqnarray}\label{eq9999}
\lim_{\lambda\to+\infty}\int_{\bbr^3}(a(x)+\frac{a_0}{\lambda})^+[e(\lambda)]^2dx=0.
\end{eqnarray}
Note that  $H_0^1(\Omega)\subset\mathcal{F}_{\lambda}^\perp$ for all $\lambda>\Lambda_0$ due to the condition $(A_3)$, we can easily show that $0<\beta(\lambda)\leq\gamma_{1}$ for all $\lambda>\Lambda_0$.  It follows from \eqref{eq0001} that $\{e(\lambda)\}$ is bounded in $\h$ for $\lambda$.  Without loss of generality, we assume that $e(\lambda)\rightharpoonup e^*$ weakly in $\h$ and $\beta(\lambda)\to\beta^*$  as $\lambda\to+\infty$.
By the Sobolev embedding theorem, the condition $(A_2)$ and \eqref{eq9999}, we must have $(e^*,\beta^*)\in H^1_0(\Omega)\times\bbr^+$  satisfying $e^*\equiv0$ outside $\Omega$ and $|a_0|^2\int_{\Omega}|e^*|^2dx=1$ and $(e(\lambda), \beta(\lambda))\to (e^*,\beta^*)$ strongly in $L^2(\bbr^3)\times\bbr$ as $\lambda\to+\infty$ up to a subsequence, which gives that
\begin{eqnarray}
\gamma_1&\geq&\int_{\bbr^3}(|\nabla e(\lambda)|^2+(\lambda a(x)+a_0)^+[e(\lambda)]^2)dx\notag\\%
&\geq&\int_{\Omega}|\nabla e^*|^2dx+o_\lambda(1)\notag\\%
&\geq&\gamma_1+o_\lambda(1).\label{eq1002}
\end{eqnarray}
Hence, $(e(\lambda), \beta(\lambda))\to (e^*,\gamma_1)$ strongly in $\h\times\bbr$ as $\lambda\to+\infty$ up to a subsequence and $(e^*,\gamma_1)$ satisfies
\begin{equation*}
\gamma_1=\int_{\Omega}|\nabla e^*|^2dx=\inf_{u\in H^1_0(\Omega)\backslash\{0\}}\frac{\int_{\Omega}|\nabla u|^2dx}{|a_0|^2\int_{\Omega}|u|^2dx}.
\end{equation*}
Thus, $e_\alpha^*=\varphi_{1}$ and we  complete the proof.
\end{proof}

We re-denote the above  $(e(\lambda), \beta(\lambda))$ by $(e_{1}(\lambda), \beta_{1}(\lambda))$ and define
\begin{equation*}
\mathcal{F}_{\lambda,1}^\perp:=\bigg\{u\in\mathcal{F}_{\lambda}^\perp\mid\frac{\|u\|_\lambda^2}{\mathcal{D}_\lambda(u,u)}=\beta_{1}(\lambda)\bigg\}.
\end{equation*}
Since $\gamma_{2}>\gamma_{1}\geq\beta_{1}(\lambda)$ for $\lambda>\Lambda_0$, it is easy to see that $\mathcal{F}_{\lambda,1}^\perp\not=\mathcal{F}_{\lambda}^\perp$.  Thus, we have $\mathcal{F}_{\lambda}^\perp=\mathcal{F}_{\lambda,1}^\perp\oplus\mathcal{F}_{\lambda,1}^{\perp,*}$, where $\mathcal{F}_{\lambda,1}^{\perp,*}$ is the orthogonal complement of $\mathcal{F}_{\lambda,1}^\perp$ in $\mathcal{F}_{\lambda}^\perp$.
\begin{lemma}\label{lem002}
Suppose that the condition $(A_1)$--$(A_3)$ hold.  Then there exists $\Lambda_{1}\geq\Lambda_0$ such that $\mathcal{F}_{\lambda,1}^{\perp}=\text{span}\{e_{1}(\lambda)\}$ and $\beta_{2}(\lambda)$ can be attained by some $e_{2}(\lambda)\in \mathcal{F}_{\lambda,1}^{\perp,*}$ for $\lambda>\Lambda_{1}$, where
\begin{equation*}
\beta_{2}(\lambda):=\inf_{\mathcal{F}_{\lambda,1}^{\perp,*}\cap\mathbb{D}_\lambda}\|u\|_\lambda^2.
\end{equation*}
Furthermore, $(e_{2}(\lambda),\beta_{2}(\lambda))\to(\varphi_{2,j},\gamma_2)$ strongly in $\h\times\bbr$ as $\lambda\to+\infty$ up to a subsequence for some $j\in\bbn$ with $1\leq j\leq k_2$.
\end{lemma}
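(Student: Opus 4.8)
The plan is to run the scheme of Lemma~\ref{lem001} one step further; the genuinely new input is the claim $\mathcal{F}_{\lambda,1}^\perp=\text{span}\{e_1(\lambda)\}$, which is what forces $\Lambda_1$ to appear. Observe first that $\mathcal{F}_{\lambda,1}^\perp$ is exactly the eigenspace, inside $\mathcal{F}_\lambda^\perp$, of the generalized eigenvalue problem $\langle u,\cdot\rangle_\lambda=\mu\,\mathcal{D}_\lambda(u,\cdot)$ at its lowest eigenvalue $\mu=\beta_1(\lambda)$ (use the Euler--Lagrange equation of the minimizing problem defining $\beta_1(\lambda)$); in particular it is a subspace, of finite dimension since $\mathcal{D}_\lambda$ is a weakly continuous quadratic form on $E_\lambda$ by $(A_1)$--$(A_2)$ and \eqref{eq0001}, and it always contains $\text{span}\{e_1(\lambda)\}$. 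So the claim amounts to excluding, for $\lambda$ large, a vector $v\in\mathcal{F}_{\lambda,1}^\perp\cap\mathbb{D}_\lambda$ with $\langle v,e_1(\lambda)\rangle_\lambda=0$. Suppose on the contrary there are $\lambda_n\to+\infty$ and such $v_n$. Since $\|v_n\|_{\lambda_n}^2=\beta_1(\lambda_n)$, each $v_n$ itself realizes $\beta_1(\lambda_n)$, so the argument of Lemma~\ref{lem001} applies to $\{v_n\}$: up to a subsequence, $v_n\to v^*$ strongly in $L^2(\bbr^3)$ with $v^*$ a nonzero ground state of \eqref{eq001}, hence a nonzero multiple of $\varphi_1$ since $\gamma_1$ is simple, and moreover $\int_{\bbrn}(\lambda_n a+a_0)^+v_n^2dx\to0$. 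Along the same (or a further) subsequence, Lemma~\ref{lem001} gives $e_1(\lambda_n)\to\varphi_1$ strongly in $\h$ and $\int_{\bbrn}(\lambda_n a+a_0)^+[e_1(\lambda_n)]^2dx\to0$. Passing to the limit in $0=\langle v_n,e_1(\lambda_n)\rangle_{\lambda_n}=\int_{\bbrn}\nabla v_n\nabla e_1(\lambda_n)dx+\int_{\bbrn}(\lambda_n a+a_0)^+v_ne_1(\lambda_n)dx$: the first integral tends to $\into\nabla v^*\nabla\varphi_1dx$ (strong $L^2$ convergence of $\nabla e_1(\lambda_n)$ paired against weak $L^2$ convergence of $\nabla v_n$), while the second is bounded by Cauchy--Schwarz by the geometric mean of the two $(\lambda_n a+a_0)^+$--masses above, hence tends to $0$. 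Thus $\into\nabla v^*\nabla\varphi_1dx=0$, and since $\varphi_1$ solves \eqref{eq001} this forces $\into v^*\varphi_1dx=0$, contradicting that $v^*$ is a nonzero multiple of $\varphi_1$. Hence there is $\Lambda_1\ge\Lambda_0$ with $\mathcal{F}_{\lambda,1}^\perp=\text{span}\{e_1(\lambda)\}$ for $\lambda>\Lambda_1$, so $\mathcal{F}_{\lambda,1}^{\perp,*}$ is closed of codimension one in $\mathcal{F}_\lambda^\perp$.

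Next, for $\lambda>\Lambda_1$ I would check that $\beta_2(\lambda)$ is attained. Since $\mathcal{F}_{\lambda,1}^{\perp,*}$ has codimension one in $\mathcal{F}_\lambda^\perp\supset H_0^1(\Omega)$, the space $(\mathcal{N}_1\oplus\mathcal{N}_2)\cap\mathcal{F}_{\lambda,1}^{\perp,*}$ is nonzero; every nonzero $u\in H_0^1(\Omega)$ has $\mathcal{D}_\lambda(u,u)=|a_0|\into u^2dx>0$ (because $a\equiv0$ on $\overline{\Omega}$ and $a_0<0$), so $\mathcal{F}_{\lambda,1}^{\perp,*}\cap\mathbb{D}_\lambda\ne\emptyset$; testing with a normalized element of $(\mathcal{N}_1\oplus\mathcal{N}_2)\cap\mathcal{F}_{\lambda,1}^{\perp,*}$ and using the variational characterization of $\gamma_2$ from \eqref{eq001} gives $\beta_2(\lambda)\le\gamma_2$. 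As in Lemma~\ref{lem001}, $\mathcal{D}_\lambda$ is weakly continuous and $\|\cdot\|_\lambda^2$ weakly lower semicontinuous on the closed subspace $\mathcal{F}_{\lambda,1}^{\perp,*}$, while a minimizing sequence is $\|\cdot\|_\lambda$--bounded; hence the direct method yields $e_2(\lambda)\in\mathcal{F}_{\lambda,1}^{\perp,*}\cap\mathbb{D}_\lambda$ realizing $\beta_2(\lambda)$, and $(\lambda a+a_0)^-\le|a_0|$ together with $|\mathcal{A}_\lambda|<+\infty$ and \eqref{eq0001} give a uniform lower bound $\beta_2(\lambda)\ge c>0$.

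For the limit, by $\|e_2(\lambda)\|_\lambda^2=\beta_2(\lambda)\le\gamma_2$ and \eqref{eq0001} the family $\{e_2(\lambda)\}$ is bounded in $\h$, so along a subsequence $e_2(\lambda)\rightharpoonup e_2^*$ in $\h$ and $\beta_2(\lambda)\to\beta_2^*$. Repeating the relevant part of Lemma~\ref{lem001} --- using $(A_2)$ to make the mass of $e_2(\lambda)$ vanish on the region where $(\lambda a+a_0)^+$ blows up, and $|\mathcal{A}_\lambda|<+\infty$ with dominated convergence to pass to the limit in $\mathcal{D}_\lambda(e_2(\lambda),e_2(\lambda))=1$ --- gives $e_2(\lambda)\to e_2^*$ strongly in $L^2(\bbr^3)$, $e_2^*\in H_0^1(\Omega)$ normalized as the $e^*$ of Lemma~\ref{lem001}, and $\int_{\bbrn}(\lambda a+a_0)^+[e_2(\lambda)]^2dx\to0$. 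Passing to the limit in $0=\langle e_2(\lambda),e_1(\lambda)\rangle_\lambda$ exactly as in the first paragraph gives $\into e_2^*\varphi_1dx=0$, i.e. $e_2^*\perp\mathcal{N}_1$ in $L^2(\Omega)$. Then weak lower semicontinuity of the Dirichlet energy yields $\gamma_2\ge\beta_2^*\ge\liminf_{\lambda\to+\infty}\int_{\bbrn}|\nabla e_2(\lambda)|^2dx\ge\into|\nabla e_2^*|^2dx\ge\gamma_2$, the last inequality being the variational characterization of $\gamma_2$ applied to $e_2^*\in H_0^1(\Omega)$ with $\into e_2^*\varphi_1dx=0$. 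Hence $\beta_2^*=\gamma_2$, the minimum $\into|\nabla e_2^*|^2dx=\gamma_2$ is attained so $e_2^*\in\mathcal{N}_2$, i.e. $e_2^*=\varphi_{2,j}$ for some $1\le j\le k_2$; moreover equality throughout the chain forces $\int_{\bbrn}|\nabla e_2(\lambda)|^2dx\to\int_{\bbrn}|\nabla e_2^*|^2dx$ and $\int_{\bbrn}(\lambda a+a_0)^+[e_2(\lambda)]^2dx\to0$, which combined with $\nabla e_2(\lambda)\rightharpoonup\nabla e_2^*$ and $e_2(\lambda)\to e_2^*$ in $L^2(\bbr^3)$ upgrades the convergence to $e_2(\lambda)\to e_2^*$ strongly in $\h$.

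The main obstacle is the dimension claim of the first paragraph. A priori $\mathcal{F}_{\lambda,1}^\perp$ need not be one--dimensional for moderate $\lambda$, and the only leverage available is that as $\lambda\to+\infty$ the lowest eigenvalue $\beta_1(\lambda)$ of the pencil collapses onto the simple eigenvalue $\gamma_1$ of \eqref{eq001}. Exploiting this requires passing to the limit in the $\lambda$--dependent inner product $\langle\cdot,\cdot\rangle_\lambda$, which is delicate because the weight $(\lambda a+a_0)^+$ is unbounded; the point that makes it work is that every minimizer of the pencil has asymptotically vanishing $(\lambda a+a_0)^+$--mass, so the potential part of $\langle\cdot,\cdot\rangle_\lambda$ drops out in the limit and only the $H_0^1(\Omega)$--orthogonality $\into v^*\varphi_1dx=0$ survives, which is incompatible with $v^*$ being a ground state of \eqref{eq001}. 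The attainment step and the limit step are, by contrast, routine adaptations of Lemma~\ref{lem001}.
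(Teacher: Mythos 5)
Your proposal is correct and follows essentially the same strategy as the paper's proof: one-dimensionality of $\mathcal{F}_{\lambda,1}^{\perp}$ for large $\lambda$ by a contradiction argument exploiting that every minimizer realizing $\beta_1(\lambda)$ converges (up to sign and subsequences) to $\varphi_1$ with vanishing $(\lambda a(x)+a_0)^+$--mass, attainment of $\beta_2(\lambda)$ by the direct method using weak continuity of $\mathcal{D}_\lambda$ and weak lower semicontinuity of $\|\cdot\|_\lambda^2$, and identification of the limit of $(e_2(\lambda),\beta_2(\lambda))$ with a second eigenpair of \eqref{eq001} by passing the $\langle\cdot,\cdot\rangle_\lambda$--orthogonality with $e_1(\lambda)$ to the limit. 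Your two local variations --- getting $\beta_2(\lambda)\leq\gamma_2$ for each large $\lambda$ by intersecting $\mathcal{N}_1\oplus\mathcal{N}_2$ with the codimension-one subspace $\mathcal{F}_{\lambda,1}^{\perp,*}$ instead of the paper's projection of $\varphi_{2,1}$ and $\limsup$ computation, and pinning $\beta_2^*=\gamma_2$ via the variational characterization of $\gamma_2$ over the $L^2(\Omega)$--orthogonal complement of $\varphi_1$ rather than the paper's Lagrange-multiplier dichotomy $\beta_2^*\in\{\gamma_1,\gamma_2\}$ --- are sound and, if anything, slightly cleaner, but do not change the underlying approach.
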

\begin{proof}
Since $\mathcal{D}_\lambda(u,u)$ and $\|u\|_\lambda^2$ are weakly continuous and weakly low semi-continuous on $\mathcal{F}_{\lambda,1}^{\perp,*}$ respectively, by the fact that $\mathcal{F}_{\lambda,1}^{\perp,*}$ is weakly closed for $\lambda>\Lambda_0$, we can also use a standard argument to show that $\beta_{2}(\lambda)$ can be attained by some $e_{2}(\lambda)\in \mathcal{F}_{\lambda,1}^{\perp,*}$ for $\lambda>\Lambda_0$.  For the sake of clarity, the remaining proof will be performed through the following steps.

{\bf Step~1}\quad We prove that there exists $\Lambda_{1}\geq\Lambda_0$ such that $\mathcal{F}_{\lambda,1}^{\perp}=\text{span}\{e_{1}(\lambda)\}$ for $\lambda>\Lambda_{1}$.

Indeed, suppose on the contrary that there exist $e_{1}^*(\lambda_n),e_{1}^0(\lambda_n)\in\mathcal{F}_{\alpha,\lambda,1}^{\perp}$ with
\begin{equation*}
\langle e_{1}^*(\lambda_n), e_{1}^0(\lambda_n)\rangle_{E_{\lambda_n},E_{\lambda_n}}=0
\end{equation*}
and
$$
\int_{\bbr^3}(\lambda_n a(x)+a_0)^-[e_{1}^*(\lambda_n)]^2dx=\int_{\bbr^3}(\lambda_n a(x)+a_0)^-[e_{1}^0(\lambda_n)]^2dx=1
$$
for $\{\lambda_n\}$ satisfying $\lambda_n\to+\infty$ as $n\to\infty$.  By Lemma~\ref{lem001}, we can see that $e_{1}^*(\lambda_n)\to\varphi_1$ and $e_{1}^0(\lambda_n)\to\varphi_1$ strongly in $\h$ as $n\to\infty$ up to a subsequence.  It follows from \eqref{eq1002} and Lemma~\ref{lem001} once more that
\begin{eqnarray}
2\gamma_1&=&2\beta_{1}(\lambda_n)+o_n(1)\notag\\
&=&\|e_{1}^*(\lambda_n)\|_{\lambda_n}^2+\|e_{1}^0(\lambda_n)\|_{\lambda_n}^2+o_n(1)\notag\\
&=&\|\nabla (e_{1}^*(\lambda_n)-e_{1}^0(\lambda_n))\|_{L^2(\bbr^3)}^2+o_n(1)\notag\\
&=&o_n(1),\label{eq100}
\end{eqnarray}
which is a contradiction.

{\bf Step~2}\quad We show that $\limsup_{\lambda\to+\infty}\beta_{2}(\lambda)\leq\gamma_2$.

In fact, by Step~1, we have $\varphi_{2,1}=d_{\lambda}^* e_{1}(\lambda)+\varphi_{2,1,\lambda}^*$, where $d_{\lambda}^*$ is a constant and $\varphi_{2,1,\lambda}^*$ is the projection of $\varphi_{2,1}$ in $\mathcal{F}_{\lambda,1}^{\perp,*}$.  Thus, $\langle e_{1}(\lambda),\varphi_{2,1}\rangle_{E_\lambda,E_\lambda}=d_{\lambda}^*\|e_{1}(\lambda)\|_\lambda^2$.  It follows from the condition $(A_3)$ and Lemma~\ref{lem001} that $d_{\lambda}^*\to0$ as $\lambda\to+\infty$ up to a subsequence.  Now, by the definition of $\beta_{2}(\lambda)$, we can see from Lemma~\ref{lem001}, \eqref{eq1002} and a variant of the Lebesgue dominated convergence theorem (cf. \cite[Theorem~2.2]{PK74}) that
\begin{eqnarray*}
\limsup_{\lambda\to+\infty}\beta_{2}(\lambda)&\leq&\limsup_{\lambda\to+\infty}\frac{\|\varphi_{2,1,\lambda}^*\|_\lambda^2}
{\mathcal{D}_\lambda(\varphi_{2,1,\lambda}^*,\varphi_{2,1,\lambda}^*)}\\
&=&\limsup_{\lambda\to+\infty}\frac{\|\varphi_{2,1}-d_{\lambda}^* e_{1}(\lambda)\|_\lambda^2}{\mathcal{D}_\lambda(\varphi_{2,1}-d_{\lambda}^* e_{1}(\lambda),\varphi_{2,1}-d_{\lambda}^* e_{1}(\lambda))}\\
&=&\frac{\|\nabla\varphi_{2,1}\|_{L^2(\bbr^3)}^2}{|a_0|^2\|\varphi_{2,1}\|_{L^2(\bbr^3)}^2}\\
&=&\gamma_2.
\end{eqnarray*}

{\bf Step~3}\quad We prove that $\limsup_{\lambda\to+\infty}\beta_{2}(\lambda)\geq\gamma_2$ and $(e_{2}(\lambda),\beta_{2}(\lambda))\to(\varphi_{2,j},\gamma_2)$ strongly in $\h\times\bbr$ as $\lambda\to+\infty$ up to a subsequence for some $j\in\bbn$ with $1\leq j\leq k_2$.

Actually, by Step~2, we know that $\{e_{2}(\lambda)\}$ is bounded in $D^{1,2}(\bbr^3)$.  Similarly as in the proof of Lemma~\ref{lem001}, we can see that $(e_{2}(\lambda),\beta_{2}(\lambda))\to(e_{2}^*,\beta_{2}^*)$ strongly in $L^2(\bbr^3)\times\bbr$ as $\lambda\to+\infty$ up to a subsequence with $e_{2}^*\in H^1_0(\Omega)$ and $e_{2}^*\equiv0$ outside $\Omega$.  Since $\mathcal{D}_\lambda(u,u)$ is weakly continuous on $\mathcal{F}_{\lambda}^\perp$, we also have $|a_0|^2\int_{\Omega}|e_2^*|^2dx=1$.  Furthermore, by the theory of Lagrange multipliers, we can also see that $(e_{2}^*,\beta_{2}^*)$ satisfies \eqref{eq001}.  It follows from a variant of the Lebesgue dominated convergence theorem (cf. \cite[Theorem~2.2]{PK74}) that
\begin{eqnarray*}
\|\nabla e_{2}^*\|_{L^2(\bbr^3)}^2&=&\beta_{2}^*|a_0|\|e_{2}^*\|_{L^2(\bbr^3)}^2\\
&=&\beta_{2}(\lambda)\mathcal{D}_\lambda(e_{2}(\lambda),e_{2}^*(\lambda))+o_\lambda(1)\\
&=&\int_{\bbr^3}|\nabla e_{2}(\lambda)|^2dx+o_\lambda(1)\\
&\geq&\|\nabla e_{2}^*\|_{L^2(\bbr^3)}^2.
\end{eqnarray*}
Thus, $(e_{2}(\lambda),\beta_{2}(\lambda))\to(e_{2}^*,\beta_{2}^*)$ strongly in $\h\times\bbr$ as $\lambda\to+\infty$ up to a subsequence.  Due to Step~2, we must have $\beta_{2}^*=\gamma_1$ or $\beta_{2}^*=\gamma_2$.  If $\limsup_{\lambda\to+\infty}\beta_{2}(\lambda)<\gamma_2$ then there exists $\{\lambda_n\}$ such that $(e_{2}(\lambda_n),\beta_{2}(\lambda_n))\to(\varphi_1,\gamma_1)$ strongly in $L^2(\bbr^3)\times\bbr$ as $n\to\infty$ up to a subsequence.  It follows from Lemma~\ref{lem001}, \eqref{eq1002} and Step~2 that
\begin{equation*}
0=\langle e_{1}(\lambda_n), e_{2}(\lambda_n)\rangle_{\lambda_n, \lambda_n}=\|\nabla\varphi_1\|_{L^2(\bbr^3)}^2+o_n(1),
\end{equation*}
which is a contradiction.
\end{proof}

Let
\begin{equation*}
\mathcal{F}_{\lambda,2}^\perp:=\bigg\{u\in\mathcal{F}_{\lambda}^\perp\mid\frac{\|u\|_\lambda^2}{\mathcal{D}_\lambda(u,u)}=\beta_{2}(\lambda)\bigg\}.
\end{equation*}
Since $\gamma_{3}>\gamma_{2}$, it yields from  Lemma~\ref{lem002} and the condition $(A_3)$ that $\mathcal{F}_{\lambda,1}^\perp\oplus\mathcal{F}_{\lambda,2}^\perp\not=\mathcal{F}_{\lambda}^\perp$.
\begin{lemma}\label{lem003}
Suppose that the conditions $(A_1)$--$(A_3)$ hold.  Then there exists $\Lambda_{2}\geq\Lambda_{1}$ such that dim$(\mathcal{F}_{\lambda,2}^\perp)\leq k_2$ for $\lambda>\Lambda_{2}$.
\end{lemma}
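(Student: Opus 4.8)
The plan is to argue by contradiction and to reduce the statement to the finiteness of the multiplicity $k_2$ of $\gamma_2$ as an eigenvalue of \eqref{eq001}, in the spirit of Step~1 of the proof of Lemma~\ref{lem002}. First I would fix $\Lambda_{1}'\geq\Lambda_{1}$ so that $\beta_{1}(\lambda)<\beta_{2}(\lambda)$ for all $\lambda>\Lambda_{1}'$; this is legitimate because $\beta_{1}(\lambda)\to\gamma_1$ by Lemma~\ref{lem001}, $\beta_{2}(\lambda)\to\gamma_2$ by Lemma~\ref{lem002}, and $\gamma_1<\gamma_2$. The claim will then hold with $\Lambda_2:=\Lambda_{1}'$.

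Next I would record the structure of $\mathcal{F}_{\lambda,2}^\perp$ for $\lambda>\Lambda_{1}'$. Exactly as in Lemma~\ref{lem002}, $\mathcal{F}_{\lambda,2}^\perp$ is a linear subspace of $\mathcal{F}_{\lambda,1}^{\perp,*}$ on which $\|u\|_\lambda^2=\beta_{2}(\lambda)\mathcal{D}_\lambda(u,u)$; in particular $\mathcal{D}_\lambda$ is positive definite there. Moreover each $u\in\mathcal{F}_{\lambda,2}^\perp\setminus\{0\}$ attains $\beta_{2}(\lambda)=\inf_{\mathcal{F}_{\lambda,1}^{\perp,*}\cap\mathbb{D}_\lambda}\|\cdot\|_\lambda^2$, so, arguing as for $e_{2}(\lambda)$ and using that $e_{1}(\lambda)$ minimizes $\|\cdot\|_\lambda^2$ over all of $\mathcal{F}_{\lambda}^\perp\cap\mathbb{D}_\lambda$, one gets $\langle u,w\rangle_\lambda=\beta_{2}(\lambda)\mathcal{D}_\lambda(u,w)$ for every $w\in\mathcal{F}_{\lambda}^\perp$. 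Since $H_0^1(\Omega)\subset\mathcal{F}_{\lambda}^\perp$ by $(A_3)$ and since $a\equiv0$ on $\Omega$ (hence $(\lambda a+a_0)^+\equiv0$ and $(\lambda a+a_0)^-\equiv|a_0|$ on $\Omega$), testing with $w\in H_0^1(\Omega)$ yields
\[
\int_{\bbr^3}\nabla u\nabla w\,dx=\beta_{2}(\lambda)|a_0|\int_\Omega uw\,dx\qquad\text{for all }w\in H_0^1(\Omega).
\]

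Now suppose, for contradiction, that $\dim(\mathcal{F}_{\lambda_n,2}^\perp)\geq k_2+1$ along some $\lambda_n\to+\infty$. For each $n$ pick $v_1(\lambda_n),\dots,v_{k_2+1}(\lambda_n)\in\mathcal{F}_{\lambda_n,2}^\perp$ orthonormal with respect to $\mathcal{D}_{\lambda_n}$. Then $\|v_i(\lambda_n)\|_{\lambda_n}^2=\beta_{2}(\lambda_n)$ is bounded by Lemma~\ref{lem002}, so by \eqref{eq0001} the family $\{v_i(\lambda_n)\}$ is bounded in $\h$; after a common subsequence, $v_i(\lambda_n)\rightharpoonup v_i^*$ weakly in $\h$ for each $i$. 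Mimicking Lemma~\ref{lem001} --- the estimate $\int_{\bbr^3}(\lambda_n a+a_0)^+[v_i(\lambda_n)]^2dx\leq\beta_{2}(\lambda_n)$ together with Fatou forces $v_i^*\equiv0$ outside $\Omega$, hence $v_i^*\in H_0^1(\Omega)$, and $(A_2)$ gives $v_i(\lambda_n)\to v_i^*$ strongly in $L^2(\mathcal{A}_\infty)$ --- and then passing to the limit in the identity above tested against a fixed $w\in H_0^1(\Omega)$, using $\beta_{2}(\lambda_n)\to\gamma_2$, I obtain $\int_{\bbr^3}\nabla v_i^*\nabla w\,dx=\gamma_2|a_0|\int_\Omega v_i^*w\,dx$; that is, $v_i^*$ solves \eqref{eq001} with $\gamma=\gamma_2$.

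Finally I would pass to the limit in $\mathcal{D}_{\lambda_n}(v_i(\lambda_n),v_j(\lambda_n))=\delta_{ij}$. Splitting the integral over $\Omega$ (where $(\lambda_n a+a_0)^-\equiv|a_0|$) and over $\mathcal{A}_{\lambda_n}\setminus\Omega$, and using $0\leq(\lambda_n a+a_0)^-\leq|a_0|$ on $\mathcal{A}_{\lambda_n}$, the strong $L^2(\mathcal{A}_\infty)$-convergence of the $v_i(\lambda_n)$, and $|\mathcal{A}_{\lambda_n}\setminus\Omega|\to0$ (which holds since $\mathcal{A}_{\lambda_n}\downarrow\overline{\Omega}$ by $(A_1)$ and $(A_3)$ and the sets have finite measure by $(A_2)$), I get $\mathcal{D}_{\lambda_n}(v_i(\lambda_n),v_j(\lambda_n))\to|a_0|\int_\Omega v_i^*v_j^*dx$. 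Hence $|a_0|\int_\Omega|v_i^*|^2dx=1$ (so $v_i^*\not\equiv0$) and $\int_\Omega v_i^*v_j^*dx=0$ for $i\neq j$. Thus $\{v_1^*,\dots,v_{k_2+1}^*\}$ are $k_2+1$ linearly independent elements of the eigenspace $\mathcal{N}_2$ of $\gamma_2$, contradicting $\dim\mathcal{N}_2=k_2$. The delicate point is the structural step of the second paragraph --- showing that \emph{every} element of $\mathcal{F}_{\lambda,2}^\perp$, not just the minimizer $e_{2}(\lambda)$, satisfies the Euler--Lagrange identity $\langle u,w\rangle_\lambda=\beta_{2}(\lambda)\mathcal{D}_\lambda(u,w)$; this is exactly where $\beta_{1}(\lambda)<\beta_{2}(\lambda)$ and the decomposition $\mathcal{F}_{\lambda}^\perp=\mathcal{F}_{\lambda,1}^\perp\oplus\mathcal{F}_{\lambda,1}^{\perp,*}$ of Lemma~\ref{lem002} enter. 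The remaining steps are routine adaptations of the compactness analysis already carried out in Lemmas~\ref{lem001} and \ref{lem002}.
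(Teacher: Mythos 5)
Your proposal is correct and is essentially the paper's own argument: the paper, too, bounds $\dim(\mathcal{F}_{\lambda,2}^\perp)$ by $k_2=\dim\mathcal{N}_2$ by letting $\lambda\to+\infty$ and showing that mutually orthogonal, $\mathcal{D}_\lambda$-normalized elements of $\mathcal{F}_{\lambda,2}^\perp$ converge to mutually orthogonal eigenfunctions of \eqref{eq001} associated with $\gamma_2$, the only difference being bookkeeping (the paper quotes the convergence of Lemma~\ref{lem002} for arbitrary elements of $\mathcal{F}_{\lambda,2}^\perp$ and excludes two orthogonal elements sharing a limit via the argument of \eqref{eq100}, whereas you take $k_2+1$ elements orthonormal for $\mathcal{D}_{\lambda_n}$, recover their convergence from the Euler--Lagrange identity, and pass to the limit in the Gram matrix). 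The \emph{delicate point} you flag---that every element of $\mathcal{F}_{\lambda,2}^\perp$, not only the minimizer $e_{2}(\lambda)$, satisfies the Euler--Lagrange relation and converges into $\mathcal{N}_2$---is glossed over by the paper as well, so your spelled-out treatment is, if anything, slightly more careful than the original proof.
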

\begin{proof}
Let $e_{2}(\lambda),e_{2}'(\lambda)\in\mathcal{F}_{\lambda,2}^\perp$.  By Lemma~\ref{lem002}, $e_{2}(\lambda)\to\varphi_{2,j}$ and $e_{2}'(\lambda)\to\varphi_{2,j'}$ strongly in $\h\times\bbr$ as $\lambda\to+\infty$ up to a subsequence for some $j,j'\in\bbn$ with $1\leq j,j'\leq k_2$.  Clearly, two cases may occur:
\begin{enumerate}
\item[$(1)$] $\varphi_{2,j}=\varphi_{2,j'}$;
\item[$(2)$] $\varphi_{2,j}\not=\varphi_{2,j'}$ and $\int_{\Omega}\varphi_{2,j}\varphi_{2,j'}dx=0$.
\end{enumerate}
If case~$(1)$ happens then by a similar argument used in the proof of \eqref{eq100}, we can get that $\gamma_{2}=0$, which is a contradiction.  Thus, we must have case~$(2)$.  It follows that there exists $\Lambda_{2}\geq\Lambda_{1}$ such that dim$(\mathcal{F}_{\lambda,2}^\perp)\leq k_2$ for $\lambda>\Lambda_{2}$.
\end{proof}

Now, by iterating, for $m=3,4,\cdots$, we can define $\beta_{m}(\lambda)$  as follows:
\begin{equation*}
\beta_{m}(\lambda):=\inf_{\mathcal{F}_{\lambda,m}^{\perp,*}\cap\mathbb{D}_\lambda}\|u\|_\lambda^2,
\end{equation*}
where $\mathcal{F}_{\lambda,m}^{\perp,*}:=\{u\in\mathcal{F}_{\lambda}^\perp\mid\langle u,v\rangle_\lambda=0,\text{ for all }v\in\bigoplus_{i=1}^{m-1}\mathcal{F}_{\lambda,i}^\perp\}$ and
\begin{equation*}
\mathcal{F}_{\lambda,i}^\perp:=\bigg\{u\in\mathcal{F}_{\lambda}^\perp\mid\frac{\|u\|_\lambda^2}{\mathcal{D}_\lambda(u,u)}=\beta_{i}(\lambda)\bigg\}.
\end{equation*}
Similarly as Lemmas~\ref{lem002} and \ref{lem003}, we can obtain the following result.
\begin{lemma}\label{lem004}
Suppose that the condition $(A_1)$--$(A_3)$ hold.  Then there exists $\Lambda_{m}\geq\Lambda_{m-1}$ such that $\beta_{m}(\lambda)$ can be attained by some $e_{m}(\lambda)\in \mathcal{F}_{\lambda,m}^{\perp,*}$ for $\lambda>\Lambda_{m}$.
Furthermore, $(e_{m}(\lambda),\beta_{m}(\lambda))\to(\varphi_{m,j},\gamma_m)$ strongly in $\h\times\bbr$ as $\lambda\to+\infty$ up to a subsequence for some $j\in\bbn$ with $1\leq j\leq k_m$ and dim$(\mathcal{F}_{\lambda,m}^\perp)\leq k_m$ for $\lambda>\Lambda_{m}$, where
\begin{equation*}
\mathcal{F}_{\lambda,m}^\perp:=\bigg\{u\in\mathcal{F}_{\lambda}^\perp\mid\frac{\|u\|_\lambda^2}{\mathcal{D}_\lambda(u,u)}=\beta_{m}(\lambda)\bigg\}.
\end{equation*}
\end{lemma}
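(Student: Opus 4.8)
The plan is to argue by induction on $m$, each inductive step reproducing the scheme of Lemmas~\ref{lem002} and~\ref{lem003}; the cases $m\le 2$ are already contained in Lemmas~\ref{lem001}--\ref{lem003}. So I fix $m\ge 3$ and assume the conclusions of Lemmas~\ref{lem001}--\ref{lem004} for all indices smaller than $m$, adding to the induction hypothesis that for each $i<m$ the space $\mathcal{F}_{\lambda,i}^{\perp}$ is finite dimensional and that a suitably normalized basis of $\mathcal{F}_{\lambda,i}^{\perp}$ converges, strongly in $\h$ and up to a subsequence, to an $L^2(\Omega)$--orthogonal system contained in $\mathcal{N}_i$ as $\lambda\to+\infty$. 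All considerations are for $\lambda>\Lambda_{m-1}$.

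First I would observe that the minimization problem defining $\beta_m(\lambda)$ is non-degenerate: by $(A_3)$ the space $H_0^1(\Omega)\subset\mathcal{F}_\lambda^{\perp}$ is infinite dimensional, while $\bigoplus_{i=1}^{m-1}\mathcal{F}_{\lambda,i}^{\perp}$ is finite dimensional by the induction hypothesis, so $\mathcal{F}_{\lambda,m}^{\perp,*}$ is a non-trivial closed subspace of $E_\lambda$ of finite codimension, it meets $H_0^1(\Omega)$ in an infinite dimensional subspace and hence $\mathcal{F}_{\lambda,m}^{\perp,*}\cap\mathbb{D}_\lambda\ne\emptyset$. Since $\mathcal{D}_\lambda(u,u)$ is weakly continuous and $\|u\|_\lambda^2$ weakly lower semicontinuous on the weakly closed set $\mathcal{F}_{\lambda,m}^{\perp,*}$, the standard minimization argument used in Lemmas~\ref{lem002} and~\ref{lem003} yields a minimizer $e_m(\lambda)\in\mathcal{F}_{\lambda,m}^{\perp,*}\cap\mathbb{D}_\lambda$, which by the Lagrange multiplier rule satisfies $\langle e_m(\lambda),v\rangle_\lambda=\beta_m(\lambda)\mathcal{D}_\lambda(e_m(\lambda),v)$ for all $v\in\mathcal{F}_{\lambda,m}^{\perp,*}$.

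The core is the convergence $(e_m(\lambda),\beta_m(\lambda))\to(\varphi_{m,j},\gamma_m)$. For the upper bound $\limsup_{\lambda\to+\infty}\beta_m(\lambda)\le\gamma_m$ I would imitate Step~2 of Lemma~\ref{lem002}: decompose $\varphi_{m,1}=w_\lambda+\varphi_{m,1,\lambda}^{*}$ with $w_\lambda$ the $\langle\cdot,\cdot\rangle_\lambda$--projection of $\varphi_{m,1}$ onto $\bigoplus_{i=1}^{m-1}\mathcal{F}_{\lambda,i}^{\perp}$ and $\varphi_{m,1,\lambda}^{*}\in\mathcal{F}_{\lambda,m}^{\perp,*}$; using the convergence of the lower spaces and $\int_\Omega\varphi_{m,1}\varphi_{i,j}\,dx=0$ for $i<m$ one checks $\|w_\lambda\|_\lambda=o_\lambda(1)$, and then the variant of the Lebesgue dominated convergence theorem (cf.~\cite[Theorem~2.2]{PK74}) gives, just as in Lemma~\ref{lem002}, $\limsup_{\lambda\to+\infty}\beta_m(\lambda)\le\limsup_{\lambda\to+\infty}\|\varphi_{m,1,\lambda}^{*}\|_\lambda^2/\mathcal{D}_\lambda(\varphi_{m,1,\lambda}^{*},\varphi_{m,1,\lambda}^{*})=\gamma_m$. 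From this bound $\{e_m(\lambda)\}$ is bounded in $D^{1,2}(\bbrn)$, so along a subsequence $e_m(\lambda)\rightharpoonup e_m^{*}$ in $\h$ and $\beta_m(\lambda)\to\beta_m^{*}$; repeating the argument of Lemma~\ref{lem001} (via $\mathcal{D}_\lambda(e_m(\lambda),e_m(\lambda))=1$, $(A_2)$ and \eqref{eq0001}) one obtains $e_m^{*}\in H_0^1(\Omega)$, $e_m^{*}\equiv0$ outside $\Omega$, $|a_0|^2\int_\Omega|e_m^{*}|^2dx=1$ and $e_m(\lambda)\to e_m^{*}$ strongly in $L^2(\bbrn)$, while the Lagrange multiplier rule for the limit shows $(e_m^{*},\beta_m^{*})$ solves \eqref{eq001}, so $\beta_m^{*}\in\{\gamma_i\}$; the chain of (in)equalities of Step~3 of Lemma~\ref{lem002} then upgrades this to strong convergence of $e_m(\lambda)$ in $\h$. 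To identify $\beta_m^{*}=\gamma_m$, suppose $\beta_m^{*}=\gamma_i$ with $i<m$; then $e_m^{*}$ is a non-zero element of $\mathcal{N}_i$, whereas $e_m(\lambda)$ is $\langle\cdot,\cdot\rangle_\lambda$--orthogonal to all of $\mathcal{F}_{\lambda,i}^{\perp}$, and passing to the limit in this relation (the potential parts being $o_\lambda(1)$ by \eqref{eq1002}, exactly as in \eqref{eq100}) forces $e_m^{*}$ to be $L^2(\Omega)$--orthogonal to $\mathcal{N}_i$, hence $e_m^{*}=0$, a contradiction; so $\beta_m^{*}=\gamma_m$ and $e_m^{*}=\varphi_{m,j}$ for some $1\le j\le k_m$. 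The dimension estimate is then obtained exactly as in Lemma~\ref{lem003}: two $\langle\cdot,\cdot\rangle_\lambda$--orthogonal elements of $\mathcal{F}_{\lambda,m}^{\perp}$ have subsequential limits $\varphi_{m,j},\varphi_{m,j'}\in\mathcal{N}_m$ which cannot coincide (that would give $2\gamma_m=o_n(1)$ by \eqref{eq100}), hence are $L^2(\Omega)$--orthogonal, so for $\lambda$ large at most $k_m=\dim\mathcal{N}_m$ of them occur; choosing $\Lambda_m\ge\Lambda_{m-1}$ as the largest of the finitely many thresholds met above finishes the induction.

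I expect the real difficulty to be the inductive bookkeeping of the orthogonality, in particular making the convergence of the lower ``eigenspaces'' $\mathcal{F}_{\lambda,i}^{\perp}$ sharp enough for the step that excludes $\beta_m^{*}=\gamma_i$ with $i<m$. That step needs a normalized basis of $\mathcal{F}_{\lambda,i}^{\perp}$ to converge to an $L^2(\Omega)$--orthonormal system spanning \emph{all} of $\mathcal{N}_i$, i.e. it really uses $\dim(\mathcal{F}_{\lambda,i}^{\perp})=k_i$ rather than only $\le k_i$; when all the $\gamma_i$ are simple this is automatic, and in general I would adjoin to the induction hypothesis the matching lower bound $\dim(\mathcal{F}_{\lambda,i}^{\perp})\ge k_i$ for $\lambda$ large, proved by a min-max argument with $k_i$ almost-$\langle\cdot,\cdot\rangle_\lambda$--orthogonal trial functions built from $\varphi_{i,1},\dots,\varphi_{i,k_i}$ and their projections. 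The other ingredients — attainment, the test-function upper bound, the boundedness of $\{e_m(\lambda)\}$ and the strong $\h$--convergence — are routine repetitions of the corresponding parts of Lemmas~\ref{lem001}--\ref{lem003}.
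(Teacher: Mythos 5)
Your overall route coincides with the paper's: the paper offers no separate argument for this lemma at all, saying only that it follows ``similarly as Lemmas~\ref{lem002} and \ref{lem003}'', and your induction is a faithful spelling-out of that iteration (attainment of $\beta_m(\lambda)$, the projected test function $\varphi_{m,1}$ giving $\limsup_{\lambda\to+\infty}\beta_m(\lambda)\le\gamma_m$, strong $L^2$ and then $H^1$ convergence of $e_m(\lambda)$, exclusion of lower limit values via the $\langle\cdot,\cdot\rangle_\lambda$--orthogonality to the lower levels, and the dimension bound as in Lemma~\ref{lem003}).

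The issue you flag at the end is, however, a genuine gap --- and your proposed repair does not close it. To exclude $\beta_m^*=\gamma_i$ with $i<m$ you need the lower minimizer sets $\mathcal{F}_{\lambda,i}^{\perp}$ to converge onto \emph{all} of $\mathcal{N}_i$, i.e. $\dim\mathcal{F}_{\lambda,i}^{\perp}=k_i$, and you propose to obtain the missing inequality $\dim\mathcal{F}_{\lambda,i}^{\perp}\ge k_i$ by a min-max argument with $k_i$ trial functions. But min-max only shows that, counted with multiplicity, at least $k_i$ generalized eigenvalues of $\|\cdot\|_\lambda^2$ relative to $\mathcal{D}_\lambda$ on $\mathcal{F}_\lambda^{\perp}$ lie in any prescribed neighborhood of $\gamma_i$ for $\lambda$ large; it does not show that these values coincide for finite $\lambda$. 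If the cluster splits (nothing in $(A_1)$--$(A_3)$ prevents this), the minimizer set at a single level has dimension strictly less than $k_i$, and then the next $\beta$, being the infimum over the orthogonal complement of that level only, converges again to $\gamma_i$ rather than to $\gamma_{i+1}$; so with the lemma's indexing the conclusion $\beta_m(\lambda)\to\gamma_m$ fails, not merely its proof. In other words, the difficulty you located is real, it is present in the paper's own statement as well (the bound $\dim(\mathcal{F}_{\lambda,m}^{\perp})\le k_m$ is perfectly compatible with splitting), and it cannot be removed by a dimension count; the robust repair is to run the iteration counting multiplicity (peel off one-dimensional minimizers, or group together all levels converging to the same $\gamma_i$), which gives $\beta_m(\lambda)\to\gamma_{i(m)}$ with cumulative-multiplicity indexing and is all that is actually needed later: Lemma~\ref{lem005} only uses the dichotomy of the quadratic form about the value $1$, not the exact labels. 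When every $\gamma_i$ is simple, your argument is complete as written and reproduces the intended proof.
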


Let $k_0^*$ be given in \eqref{eq9998}, then by Lemmas~\ref{lem001}, \ref{lem002} and \ref{lem004}, $\bigoplus_{i=1}^{k_0^*-1}\mathcal{F}_{\lambda,i}^\perp$ and $\mathcal{F}_{\lambda,k_0^*}^{\perp,*}$ are well defined for $\lambda>\Lambda_{k_0^*}$.
\begin{lemma}\label{lem005}
Suppose that  the conditions $(A_1)$--$(A_3)$ hold.  If $\gamma_{k_0^*-1}<1$
then there exists $\widetilde{\Lambda}_{k_0^*}\geq\Lambda_{k_0^*}$ such that for $\lambda>\widetilde{\Lambda}_{k_0^*}$, it holds that
\begin{enumerate}
\item[$(1)$] $\|u\|_{\lambda}^2-\mathcal{D}_\lambda(u,u)\leq\frac12(1-\frac{1}{\gamma_{k_0^*-1}})\|u\|_{\lambda}^2$ in $\bigoplus_{i=1}^{k_0^*-1}\mathcal{F}_{\lambda,i}^\perp$;
\item[$(2)$] $\|u\|_{\lambda}^2-\mathcal{D}_\lambda(u,u)\geq\frac12(1-\frac{1}{\gamma_{k_0^*}})\|u\|_{\lambda}^2$ in $\mathcal{F}_{\lambda,k_0^*}^{\perp,*}$.
\end{enumerate}
\end{lemma}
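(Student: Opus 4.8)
The plan is to reduce both inequalities to elementary estimates on the generalised eigenvalues $\beta_i(\lambda)$, using the structure of $\mathcal{F}_\lambda^\perp$ built up in Lemmas~\ref{lem001}--\ref{lem004}. Recall from there that, for $\lambda$ large, the sets $\mathcal{F}_{\lambda,i}^\perp$ $(1\le i\le k_0^*)$ are finite--dimensional subspaces of $\mathcal{F}_\lambda^\perp$ which, together with $\mathcal{F}_{\lambda,k_0^*}^{\perp,*}$, are mutually orthogonal with respect to \emph{both} $\langle\cdot,\cdot\rangle_\lambda$ and $\mathcal{D}_\lambda(\cdot,\cdot)$ (the simultaneous--diagonalisation picture for the generalised eigenvalue problem $\langle u,v\rangle_\lambda=\beta\,\mathcal{D}_\lambda(u,v)$ on $\mathcal{F}_\lambda^\perp$, whose discreteness comes from $|\mathcal{A}_\lambda|<+\infty$); that on $\mathcal{F}_{\lambda,i}^\perp$ one has, by definition, $\mathcal{D}_\lambda(u,u)=\beta_i(\lambda)^{-1}\|u\|_\lambda^2$; and that $0<\beta_1(\lambda)\le\cdots\le\beta_{k_0^*}(\lambda)$ with $\beta_i(\lambda)\to\gamma_i$ as $\lambda\to+\infty$, so that by \eqref{eq9998} and the hypothesis $\gamma_1<\cdots<\gamma_{k_0^*-1}<1<\gamma_{k_0^*}$. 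Finally, $\mathcal{D}_\lambda$ is positive definite on $\mathcal{F}_\lambda^\perp$ (if $\mathcal{D}_\lambda(u,u)=0$ then $\mathrm{supp}\,u\subset\bbrn\setminus\mathcal{A}_\lambda$, so $u\in\mathcal{F}_\lambda\cap\mathcal{F}_\lambda^\perp=\{0\}$), hence on every subspace $V\subset\mathcal{F}_\lambda^\perp$ one has $\|u\|_\lambda^2\ge m_V\,\mathcal{D}_\lambda(u,u)$ for all $u\in V$, where $m_V:=\inf_{w\in V\cap\mathbb{D}_\lambda}\|w\|_\lambda^2$.

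For $(1)$ I would write $u=\sum_{i=1}^{k_0^*-1}u_i$ with $u_i\in\mathcal{F}_{\lambda,i}^\perp$. By the two orthogonality relations, $\|u\|_\lambda^2=\sum_i\|u_i\|_\lambda^2$ and $\mathcal{D}_\lambda(u,u)=\sum_i\mathcal{D}_\lambda(u_i,u_i)=\sum_i\beta_i(\lambda)^{-1}\|u_i\|_\lambda^2$, so that
\[
\|u\|_\lambda^2-\mathcal{D}_\lambda(u,u)=\sum_{i=1}^{k_0^*-1}\Big(1-\tfrac{1}{\beta_i(\lambda)}\Big)\|u_i\|_\lambda^2\le\Big(1-\tfrac{1}{\beta_{k_0^*-1}(\lambda)}\Big)\|u\|_\lambda^2,
\]
since $i\mapsto 1-\beta_i(\lambda)^{-1}$ is nondecreasing (the $\beta_i(\lambda)$ increase in $i$). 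Because $1-\beta_{k_0^*-1}(\lambda)^{-1}\to 1-\gamma_{k_0^*-1}^{-1}<0$ and $A<\tfrac12A$ for every $A<0$, there is $\Lambda'\ge\Lambda_{k_0^*}$ with $1-\beta_{k_0^*-1}(\lambda)^{-1}\le\tfrac12\big(1-\gamma_{k_0^*-1}^{-1}\big)$ whenever $\lambda>\Lambda'$; multiplying by $\|u\|_\lambda^2\ge0$ and combining with the display gives $(1)$.

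For $(2)$, take $u\in\mathcal{F}_{\lambda,k_0^*}^{\perp,*}$; by the last observation above with $V=\mathcal{F}_{\lambda,k_0^*}^{\perp,*}$ and the definition of $\beta_{k_0^*}(\lambda)$, $\|u\|_\lambda^2\ge\beta_{k_0^*}(\lambda)\,\mathcal{D}_\lambda(u,u)$, hence $\mathcal{D}_\lambda(u,u)\le\beta_{k_0^*}(\lambda)^{-1}\|u\|_\lambda^2$ and
\[
\|u\|_\lambda^2-\mathcal{D}_\lambda(u,u)\ge\Big(1-\tfrac{1}{\beta_{k_0^*}(\lambda)}\Big)\|u\|_\lambda^2.
\]
Since $1-\beta_{k_0^*}(\lambda)^{-1}\to 1-\gamma_{k_0^*}^{-1}>0$ and $B>\tfrac12B$ for every $B>0$, there is $\Lambda''\ge\Lambda_{k_0^*}$ with $1-\beta_{k_0^*}(\lambda)^{-1}\ge\tfrac12\big(1-\gamma_{k_0^*}^{-1}\big)$ for $\lambda>\Lambda''$, which gives $(2)$. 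Taking $\widetilde{\Lambda}_{k_0^*}=\max\{\Lambda',\Lambda'',\Lambda_{k_0^*}\}$ completes the proof.

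I expect the only genuine work to be the structural input recalled in the first paragraph --- that the $\mathcal{F}_{\lambda,i}^\perp$ really are finite--dimensional, mutually bi-orthogonal, with $\mathcal{D}_\lambda(u,u)=\beta_i(\lambda)^{-1}\|u\|_\lambda^2$. The $\langle\cdot,\cdot\rangle_\lambda$--orthogonality and the identity are built into the definitions; the $\mathcal{D}_\lambda$--orthogonality follows by testing the Euler--Lagrange identity $\langle u,v\rangle_\lambda=\beta_i(\lambda)\,\mathcal{D}_\lambda(u,v)$ (valid for $u\in\mathcal{F}_{\lambda,i}^\perp$) against a $v$ lying in an earlier piece $\mathcal{F}_{\lambda,j}^\perp$ with $j<i$, for which $\langle u,v\rangle_\lambda=0$, whence $\mathcal{D}_\lambda(u,v)=0$ as $\beta_i(\lambda)>0$. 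Granting this, Lemma~\ref{lem005} is elementary: a convex combination of the numbers $1-\beta_i(\lambda)^{-1}$ is dominated by the largest one, the strict inequalities $\gamma_{k_0^*-1}<1<\gamma_{k_0^*}$ persist --- with room to spare, which the factor $\tfrac12$ absorbs --- under $\beta_i(\lambda)\to\gamma_i$, and only finitely many indices $i\le k_0^*$ occur, so a single threshold $\widetilde{\Lambda}_{k_0^*}$ suffices.
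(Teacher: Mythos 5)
Your proof is correct and is essentially the argument the paper intends: the paper's own ``proof'' is a one-line appeal to Lemmas~\ref{lem001}, \ref{lem002} and \ref{lem004}, and what you write out --- the Rayleigh-quotient identity $\mathcal{D}_\lambda(u,u)=\beta_i(\lambda)^{-1}\|u\|_\lambda^2$ on each $\mathcal{F}_{\lambda,i}^\perp$, the bi-orthogonal decomposition, the monotonicity $\beta_i(\lambda)\le\beta_{k_0^*-1}(\lambda)$, and the passage from $\beta_i(\lambda)\to\gamma_i$ with $\gamma_{k_0^*-1}<1<\gamma_{k_0^*}$ to the bounds with the factor $\tfrac12$ for $\lambda$ large --- is exactly the omitted content. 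The only cosmetic point is the bi-orthogonality step: it is cleaner to test the Euler--Lagrange identity of the \emph{earlier} minimizer $e_j(\lambda)$ against the later one (which does lie in the constraint space $\mathcal{F}_{\lambda,j}^{\perp,*}$), rather than the other way around, but this does not affect the validity of the argument.
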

\begin{proof}
The proof follows immediately from Lemmas~\ref{lem001}, \ref{lem002} and \ref{lem004}.
\end{proof}

\begin{remark}\label{rmk001}{\em
By Lemmas~\ref{lem002}--\ref{lem004}, we also have $\bigoplus_{i=1}^{k_0^*-1}\mathcal{F}_{\lambda,i}^\perp=\emptyset$ in the case of $\gamma_1>1$ while $\bigoplus_{i=1}^{k_0^*-1}\mathcal{F}_{\lambda,i}^\perp\not=\emptyset$ and dim$(\bigoplus_{i=1}^{k_0^*-1}\mathcal{F}_{\lambda,i}^\perp)\leq\sum_{i=1}^{k_0^*-1}k_i$ in the case of $\gamma_1<1$.}
\end{remark}

\section{The nontrivial solution}
We first consider the case of $a_0<0$.  Due to the decomposition of $E_\lambda$, we will find the nontrivial solution by the linking theorem.  Let us first verify that $J_{\alpha,\lambda}(u)$ has a linking structure in $E_\lambda$ in the case of $a_0<0$.
\begin{lemma}\label{lem006}
Suppose that the conditions $(A_1)$--$(A_3)$ hold and $a_0<0$.  For every $\alpha>0$, if $\beta_{k_0^*-1}<1$ then there exists $\rho>0$ independent of $\lambda$ such that
\begin{eqnarray}
\inf_{\mathcal{F}_{\lambda,k_0^*}^{\perp,*}\cap\mathbb{S}_{\lambda,\rho}}J_{\alpha,\lambda}(u)\geq d_0\label{eq102}
\end{eqnarray}
for all $\lambda>\widetilde{\Lambda}_{k_0^*}$, where $\mathbb{S}_{\lambda,\rho}:=\{u\in E_\lambda\mid\|u\|_\lambda=\rho\}$ and $d_0$ is a constant independent of $\lambda$ and $\alpha$.
\end{lemma}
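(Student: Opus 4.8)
The plan is to bound $J_{\alpha,\lambda}$ from below on $\mathcal{F}_{\lambda,k_0^*}^{\perp,*}$ by exploiting the spectral gap recorded in Lemma~\ref{lem005}(2), discarding the nonnegative Kirchhoff term, and then invoking the elementary fact that $t\mapsto c_1t^2-c_2t^p$ is bounded below by a positive constant on a suitably small sphere when $p>2$.

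First I would start from the rewritten expression
\[
J_{\alpha,\lambda}(u)=\frac\alpha4\|\nabla u\|_{L^2(\bbr^3)}^4+\frac12\big(\|u\|_\lambda^2-\mathcal{D}_{\lambda}(u,u)\big)-\frac1p\|u\|_{L^p(\bbr^3)}^p
\]
and simply drop $\frac\alpha4\|\nabla u\|_{L^2(\bbr^3)}^4\geq0$; this is exactly what will make the final constants $\rho$ and $d_0$ independent of $\alpha$. The stated hypothesis places us in the setting of Lemma~\ref{lem005}, so for every $\lambda>\widetilde{\Lambda}_{k_0^*}$ and every $u\in\mathcal{F}_{\lambda,k_0^*}^{\perp,*}$, part~(2) of that lemma yields
\[
\|u\|_\lambda^2-\mathcal{D}_\lambda(u,u)\geq\frac12\Big(1-\frac1{\gamma_{k_0^*}}\Big)\|u\|_\lambda^2,
\]
and $1-\frac1{\gamma_{k_0^*}}>0$ since $\gamma_{k_0^*}>1$ by the definition \eqref{eq9998} of $k_0^*$. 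Hence on $\mathcal{F}_{\lambda,k_0^*}^{\perp,*}$ one gets
\[
J_{\alpha,\lambda}(u)\geq\frac14\Big(1-\frac1{\gamma_{k_0^*}}\Big)\|u\|_\lambda^2-\frac1p\|u\|_{L^p(\bbr^3)}^p.
\]

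Next I would control the $L^p$ term uniformly in $\lambda$. By the second inequality in \eqref{eq0001}, $\|u\|_{L^p(\bbr^3)}\leq S_p^{-1/2}\sqrt{1+d_\lambda^2}\,\|u\|_\lambda$, and since $d_\lambda^2=\max\{|\mathcal{A}_\infty|^{2/3}S^{-1},\frac1{a_0+a_\infty\lambda}\}$ stays bounded as $\lambda\to+\infty$ (the second term tends to $0$), there is a constant $C_0>0$, depending only on $p$, $S$, $S_p$ and $|\mathcal{A}_\infty|$ but not on $\lambda$ or $\alpha$, such that $\|u\|_{L^p(\bbr^3)}^p\leq C_0\|u\|_\lambda^p$ for all $\lambda$ large. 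Enlarging $\widetilde{\Lambda}_{k_0^*}$ if necessary, we obtain for all $\lambda>\widetilde{\Lambda}_{k_0^*}$ and all $u\in\mathcal{F}_{\lambda,k_0^*}^{\perp,*}$,
\[
J_{\alpha,\lambda}(u)\geq\frac14\Big(1-\frac1{\gamma_{k_0^*}}\Big)\|u\|_\lambda^2-\frac{C_0}{p}\|u\|_\lambda^p.
\]

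Finally, on the sphere $\mathbb{S}_{\lambda,\rho}$ the right-hand side equals $g(\rho):=\frac14(1-\frac1{\gamma_{k_0^*}})\rho^2-\frac{C_0}{p}\rho^p$, which depends on neither $\lambda$ nor $\alpha$. Because $p>4>2$, I would fix $\rho>0$ so small that $\frac{C_0}{p}\rho^p\leq\frac18(1-\frac1{\gamma_{k_0^*}})\rho^2$; then \eqref{eq102} holds with $d_0:=\frac18(1-\frac1{\gamma_{k_0^*}})\rho^2>0$, and both $\rho$ and $d_0$ are independent of $\lambda$ and $\alpha$, as required. There is no genuine difficulty in this argument; the one point that needs care is the uniformity in $\lambda$ of the embedding constant, which is secured by the explicit form of $d_\lambda$ in \eqref{eq0001}, together with the fact that $\mathcal{F}_{\lambda,k_0^*}^{\perp,*}$ is well defined and Lemma~\ref{lem005} is applicable precisely once $\lambda>\widetilde{\Lambda}_{k_0^*}$.
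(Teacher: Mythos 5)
Your proof is correct and follows essentially the same route as the paper: drop the nonnegative Kirchhoff term, apply Lemma~\ref{lem005}(2) on $\mathcal{F}_{\lambda,k_0^*}^{\perp,*}$, bound $\|u\|_{L^p(\bbr^3)}^p$ via \eqref{eq0001} using the uniform bound on $d_\lambda$ for $\lambda>\widetilde{\Lambda}_{k_0^*}$, and choose $\rho$ small so the quadratic term dominates. The only cosmetic difference is that you "enlarge $\widetilde{\Lambda}_{k_0^*}$ if necessary," whereas the paper observes directly that $d_\lambda\leq\sqrt{\max\{|\mathcal{A}_\infty|^{2/3}S^{-1},\frac{1}{a_0+a_\infty\widetilde{\Lambda}_{k_0^*}}\}}$ already holds for all $\lambda>\widetilde{\Lambda}_{k_0^*}$, so no enlargement is needed.
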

\begin{proof}
By \eqref{eq0001} and Lemma~\ref{lem005}, for every $u\in \mathcal{F}_{\lambda,k_0^*}^{\perp,*}$, we have
\begin{eqnarray}
J_{\alpha,\lambda}(u)&=&\frac\alpha4\|\nabla u\|_{L^2(\bbr^3)}^4+\frac12\|u\|_\lambda^2-\frac12\mathcal{D}_{\lambda}(u,u)-\frac{1}{p}\|u\|_{L^{p}(\bbr^3)}^{p}\notag\\
&\geq&\frac14(1-\frac{1}{\gamma_{k_0^*}})\|u\|_{\lambda}^2-S_p^{-\frac{p}{2}}(1+d_\lambda^2)^{\frac p2}\|u\|_{\lambda}^p\notag\\
&\geq&\|u\|_{\lambda}^2\bigg(\frac14(1-\frac{1}{\gamma_{k_0^*}})-S_p^{-\frac{p}{2}}(1+d_\lambda^2)^{\frac p2}\|u\|_{\lambda}^{p-2}\bigg).\label{eq1001}
\end{eqnarray}
Note that $d_\lambda=\sqrt{\max\{|\mathcal{A}_\infty|^\frac{2}{3}S^{-1}, \frac{1}{a_0+a_\infty\lambda}\}}$, so that $d_\lambda\leq\sqrt{\max\{|\mathcal{A}_\infty|^\frac{2}{3}S^{-1}, \frac{1}{a_0+a_\infty\widetilde{\Lambda}_{k_0^*}}\}}$ for $\lambda>\widetilde{\Lambda}_{k_0^*}$.  It follows from \eqref{eq1001} that there exists $\rho>0$ independent on $\lambda$ such that \eqref{eq102} holds for all $\lambda>\widetilde{\Lambda}_{k_0^*}$.
\end{proof}

Let
\begin{eqnarray*}
\mathcal{Q}_{\lambda,k_0^*}:=\{u=v+te_{k_0^*}(\lambda)\mid t\geq0\text{ and }v\in\bigoplus_{i=1}^{k_0^*-1}\mathcal{F}_{\lambda,i}^\perp\}.
\end{eqnarray*}
Then we have the following.
\begin{lemma}\label{lem007}
Suppose that the conditions $(A_1)$--$(A_3)$ hold and $a_0<0$.  If $\gamma_{k_0^*-1}<1$ then there exist $\alpha_0>0$ and $R_0>\rho$ independent of $\lambda$ such that
\begin{eqnarray*}
\sup_{\partial\mathcal{Q}_{\lambda,k_0^*}^{R_0}}J_{\alpha,\lambda}(u)\leq\frac12d_0
\end{eqnarray*}
for all $\lambda>\widetilde{\Lambda}_{k_0^*}$  in the case of $\alpha\in(0, \alpha_0)$, where $d_0$ is given in lemma~\ref{lem006}, $\mathcal{Q}_{\lambda,k_0^*}^{R_0}:=\mathcal{Q}_{\lambda,k_0^*}\cap \mathbb{B}_{\lambda,R_0}$ and $\mathbb{B}_{\lambda,R_0}:=\{u\in E_\lambda\mid \|u\|_\lambda\leq R_0\}$.
\end{lemma}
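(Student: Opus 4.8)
The plan is to estimate $J_{\alpha,\lambda}$ separately on the two faces of the relative boundary of the finite-dimensional half-ball $\mathcal{Q}_{\lambda,k_0^*}^{R_0}$. Writing $V_\lambda:=\bigoplus_{i=1}^{k_0^*-1}\mathcal{F}_{\lambda,i}^\perp$ (with the convention $V_\lambda=\{0\}$ when $k_0^*=1$), the cone $\mathcal{Q}_{\lambda,k_0^*}=\{v+te_{k_0^*}(\lambda)\mid v\in V_\lambda,\ t\ge0\}$ is finite-dimensional, and $\partial\mathcal{Q}_{\lambda,k_0^*}^{R_0}=\mathcal{Q}_1\cup\mathcal{Q}_2$ where $\mathcal{Q}_1:=\{v\in V_\lambda\mid\|v\|_\lambda\le R_0\}$ is the face $t=0$ and $\mathcal{Q}_2:=\{v+te_{k_0^*}(\lambda)\mid v\in V_\lambda,\ t\ge0,\ \|v+te_{k_0^*}(\lambda)\|_\lambda=R_0\}$ is the spherical cap. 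The only structural facts I would use are: the mutual $\langle\cdot,\cdot\rangle_\lambda$-orthogonality of the spaces $\mathcal{F}_{\lambda,i}^\perp$, the elementary inequality $\|\nabla u\|_{L^2(\bbr^3)}^2\le\|u\|_\lambda^2$, the sign condition $\mathcal{D}_\lambda(u,u)\ge0$, Lemma~\ref{lem005}(1), and the positivity of the constant $d_0$ of Lemma~\ref{lem006}.

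The key ingredient, which I would isolate as a claim, is a $\lambda$-uniform coercivity estimate on these finite-dimensional spaces: after enlarging $\widetilde{\Lambda}_{k_0^*}$ if necessary, there is a constant $c_p>0$ independent of $\lambda$ such that $\|u\|_{L^p(\bbr^3)}\ge c_p\|u\|_\lambda$ for every $u\in\bigoplus_{i=1}^{k_0^*}\mathcal{F}_{\lambda,i}^\perp$ and every $\lambda>\widetilde{\Lambda}_{k_0^*}$. To prove it, observe that $\mathcal{D}_\lambda$ is a weakly continuous (hence compact) positive-definite quadratic form on $\mathcal{F}_\lambda^\perp$, so by the spectral theorem the eigenspaces $\mathcal{F}_{\lambda,i}^\perp$ are mutually orthogonal both for $\langle\cdot,\cdot\rangle_\lambda$ and for $\mathcal{D}_\lambda$; decomposing $u=\sum_{i=1}^{k_0^*}u^{(i)}$ along them, using $\|u^{(i)}\|_\lambda^2=\beta_i(\lambda)\mathcal{D}_\lambda(u^{(i)},u^{(i)})$, and using $\beta_i(\lambda)\le\gamma_{k_0^*}+1$ for $\lambda$ large (Lemmas~\ref{lem001},~\ref{lem002} and~\ref{lem004}) gives
\[
\|u\|_\lambda^2=\sum_{i=1}^{k_0^*}\beta_i(\lambda)\,\mathcal{D}_\lambda(u^{(i)},u^{(i)})\le(\gamma_{k_0^*}+1)\,\mathcal{D}_\lambda(u,u).
\]
Since $0\le(\lambda a(x)+a_0)^-\le|a_0|$ and this weight is supported in $\mathcal{A}_\lambda$, H\"older's inequality gives $\mathcal{D}_\lambda(u,u)\le|a_0|\,|\mathcal{A}_\lambda|^{1-2/p}\|u\|_{L^p(\bbr^3)}^2$; finally $|\mathcal{A}_\lambda|$ is nonincreasing in $\lambda$ and $|\mathcal{A}_\lambda|\to|\Omega|$ as $\lambda\to+\infty$, so it is bounded for $\lambda$ large. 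Combining the last two displays yields the claim.

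With the claim in hand the two face estimates are bookkeeping. On $\mathcal{Q}_2$, using $\mathcal{D}_\lambda(u,u)\ge0$, $\|\nabla u\|_{L^2(\bbr^3)}^2\le\|u\|_\lambda^2=R_0^2$ and the claim (note $u\in\bigoplus_{i=1}^{k_0^*}\mathcal{F}_{\lambda,i}^\perp$),
\[
J_{\alpha,\lambda}(u)\le\frac\alpha4 R_0^4+\frac12 R_0^2-\frac{c_p^p}{p}R_0^p;
\]
choosing $R_0>\rho$ large enough that $\frac12 R_0^2-\frac{c_p^p}{p}R_0^p\le0$ (possible since $p>2$) leaves $J_{\alpha,\lambda}(u)\le\frac\alpha4 R_0^4$, which is $\le\frac12 d_0$ as soon as $\alpha\le 2d_0/R_0^4$. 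On $\mathcal{Q}_1$, discarding $-\frac1p\|v\|_{L^p(\bbr^3)}^p\le0$ and invoking Lemma~\ref{lem005}(1) — this is precisely where the hypothesis $\gamma_{k_0^*-1}<1$ is used, making $1-\frac1{\gamma_{k_0^*-1}}<0$ —
\[
J_{\alpha,\lambda}(v)\le\frac\alpha4\|\nabla v\|_{L^2(\bbr^3)}^4+\frac14\Big(1-\frac1{\gamma_{k_0^*-1}}\Big)\|v\|_\lambda^2\le\frac14\|v\|_\lambda^2\Big(\alpha R_0^2+1-\frac1{\gamma_{k_0^*-1}}\Big)\le0
\]
provided $\alpha\le\big(\tfrac1{\gamma_{k_0^*-1}}-1\big)/R_0^2$; when $k_0^*=1$ the face $\mathcal{Q}_1$ is just $\{0\}$ and $J_{\alpha,\lambda}(0)=0$. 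Setting $\alpha_0$ to be the minimum of the two thresholds and keeping the above $R_0$ finishes the proof, with $\alpha_0$ and $R_0$ plainly independent of $\lambda$.

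The main obstacle is the uniform coercivity claim of the second paragraph: everything else is elementary, but that estimate is where the steep-well structure is genuinely used, since one must convert the spectral lower bound $\|u\|_\lambda^2\lesssim\mathcal{D}_\lambda(u,u)$ into a lower bound for $\|u\|_{L^p(\bbr^3)}$, and this succeeds only because $\mathcal{D}_\lambda(u,u)$ is carried by the set $\mathcal{A}_\lambda$ of uniformly bounded measure. A secondary point, needed inside the claim, is the $\mathcal{D}_\lambda$-orthogonality of the spaces $\mathcal{F}_{\lambda,i}^\perp$; rather than re-deriving it, I would deduce it from the spectral decomposition of the compact operator on $\mathcal{F}_\lambda^\perp$ associated with the form $\mathcal{D}_\lambda$.
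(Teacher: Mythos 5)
Your argument is correct, and it reaches the linking geometry by a genuinely different route than the paper at the one non-trivial point, namely the estimate on the spherical cap. The paper bounds the quadratic part by $\frac12(1-\frac1{\gamma_{k_0^*}})R^2$ and then gets the needed lower bound on $\|\widetilde u_\lambda\|_{L^p(\bbr^3)}$ by a limiting argument: along $\lambda\to+\infty$ the normalized elements of $\bigoplus_{i=1}^{k_0^*}\mathcal{F}_{\lambda,i}^\perp$ converge strongly in $\h$ to elements of the fixed finite-dimensional space span$\{\varphi_{i,j}\}$, on whose unit sphere $\|\cdot\|_{L^p}$ has a positive minimum $M$; uniformity in $\lambda$ is thus obtained by compactness (and, strictly speaking, only for $\lambda$ beyond a possibly larger threshold, exactly as in your proof). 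You instead prove a quantitative, $\lambda$-uniform coercivity inequality $\|u\|_{L^p(\bbr^3)}\geq c_p\|u\|_\lambda$ on $\bigoplus_{i=1}^{k_0^*}\mathcal{F}_{\lambda,i}^\perp$ by combining the spectral bound $\|u\|_\lambda^2\leq(\gamma_{k_0^*}+1)\mathcal{D}_\lambda(u,u)$ with H\"older on the set $\mathcal{A}_\lambda$, whose measure is nonincreasing in $\lambda$; this is cleaner, avoids the subsequence bookkeeping, and makes explicit where the steep-well structure enters. Two small caveats: the simultaneous $\langle\cdot,\cdot\rangle_\lambda$- and $\mathcal{D}_\lambda$-orthogonality of the spaces $\mathcal{F}_{\lambda,i}^\perp$ is nowhere stated in the paper, so you do need the spectral-theorem identification you invoke (it works because $\mathcal{D}_\lambda$ is weakly continuous and positive definite on $\mathcal{F}_\lambda^\perp$, and because $\beta_i(\lambda)\to\gamma_i$ with the $\gamma_i$ distinct forces the relevant eigenvalues to be distinct for large $\lambda$); and your proof, like the paper's, really yields the conclusion only after enlarging $\widetilde\Lambda_{k_0^*}$, which is harmless since Theorem~\ref{thm001} only asserts existence for $\lambda>\Lambda_*$. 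Your treatment of the flat face differs only cosmetically (you push $J_{\alpha,\lambda}\leq0$ there via a second smallness condition on $\alpha$, where the paper is content with $J_{\alpha,\lambda}\leq\frac{\alpha}{4}R_0^4\leq\frac12 d_0$), and your final choices of $R_0$ and $\alpha_0$ match the paper's in spirit.
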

\begin{proof}
Let $u_{\lambda}\in\partial\mathcal{Q}_{\lambda,k_0^*}^{R}$.  Then one of the following two cases must happen:
\begin{enumerate}
\item[$(a)$] $u_{\lambda}=R\widetilde{u}_{\lambda}$ with $\widetilde{u}_{\lambda}\in \bigoplus_{i=1}^{k_0^*-1}\mathcal{F}_{\lambda,i}^\perp$ and $\|\widetilde{u}_{\lambda}\|_\lambda\leq1$.
\item[$(b)$] $u_{\lambda}=R\widetilde{u}_{\lambda}$ with $\widetilde{u}_{\lambda}\in \mathcal{Q}_{\lambda,k_0^*}^{1}\backslash\bigoplus_{i=1}^{k_0^*-1}\mathcal{F}_{\lambda,i}^\perp$ and $\|\widetilde{u}_{\lambda}\|_\lambda=1$.
\end{enumerate}
If the case~$(b)$ happens then by Lemma~\ref{lem005}, we deduce that
\begin{eqnarray}\label{eq103}
J_{\alpha,\lambda}(u_{\lambda})=J_{\alpha,\lambda}(R\widetilde{u}_{\lambda})
\leq\frac{\alpha}{4}R^4+\frac12(1-\frac{1}{\gamma_{k_0^*}})R^2-\frac1p\|R\widetilde{u}_{\lambda}\|_{L^p(\bbr^3)}^p.
\end{eqnarray}
Since $\widetilde{u}_{\lambda}\in \bigoplus_{i=1}^{k_0^*}\mathcal{F}_{\lambda,i}^\perp$, by Lemmas~\ref{lem001}, \ref{lem002} and \ref{lem004}, $\widetilde{u}_{\lambda}=\widetilde{u}+o_\lambda(1)$ strongly in $\h$ for some $\widetilde{u}\in$span$\{\varphi_{i,j}\}^{i=1,2,\cdots,k_0^*}_{j=1,2,\cdots,k_i}$ and $\int_{\Omega}|\nabla \widetilde{u}|^2dx=1$.
Thus, $\|\widetilde{u}_{\lambda}\|_{L^p(\bbr^3)}^p=\|\widetilde{u}\|_{L^p(\bbr^3)}^p+o_\lambda(1)$ due to the Sobolev embedding theorem.  Note that
dim(span$\{\varphi_{i,j}\}^{i=1,2,\cdots,k_0^*}_{j=1,2,\cdots,k_i})\leq\sum_{i=1}^{k_0^*-1}k_i+1$ for all $\lambda>\widetilde{\Lambda}_{k_0^*}$ by Remark~\ref{rmk001}.  Therefore, there exists a constant $M>0$ such that $\|u\|_{L^p(\bbr^3)}\geq M$ for all $u\in$span$\{\varphi_{i,j}\}^{i=1,2,\cdots,k_0^*}_{j=1,2,\cdots,k_i}$ with $\int_{\Omega}|\nabla u|^2dx=1$.  In particular, $\|\widetilde{u}\|_{L^p(\bbr^3)}\geq M$.  It follows from $4<p<6$ and \eqref{eq103} that there exists a constant $R_0(>\rho)$ such that $J_{\alpha,\lambda}(R_0\widetilde{u}_{\lambda})\leq0$ for all $\lambda>\widetilde{\Lambda}_{k_0^*}$.
Now, we consider the case of $(a)$.  By Lemma~\ref{lem005} once more, we know that
\begin{eqnarray*}
J_{\alpha,\lambda}(u_{\lambda})=J_{\alpha,\lambda}(R\widetilde{u}_{\lambda})\leq\frac{\alpha}{4}R_0^4.
\end{eqnarray*}
Thus, there exists $\alpha_0>0$ such that $J_{\alpha,\lambda}(u_{\lambda})\leq \frac12 d_0$ for $\lambda>\widetilde{\Lambda}_{k_0^*}$ and $\alpha\in(0, \alpha_0)$.
\end{proof}

Due to Lemmas~\ref{lem006} and \ref{lem007}, we can see that $J_{\alpha,\lambda}(u)$ has a linking structure in $E_\lambda$ with $\lambda>\widetilde{\Lambda}_{k_0^*}$ and $\alpha\in(0, \alpha_0)$ in the case of $a_0<0$.
By the linking theorem, there exists $\{u_n\}\subset E_\lambda$ such that $(1+\|u_n\|_\lambda)J_{\alpha,\lambda}'(u_n)=o_n(1)$ strongly in $E_\lambda^*$ and
$J_{\alpha,\lambda}(u_n)=c_{\alpha,\lambda}+o_n(1)$, where $E_\lambda^*$ is the dual space of $E_\lambda$.  Furthermore, $c_{\alpha,\lambda}\in[d_0, \frac{\alpha}{4}R_0^4+\frac12(1-\frac{1}{\gamma_{k_0^*}}) R_0^2]$.  Note that in the special case $\gamma_1>1$, the linking structure is actually the mountain pass geometry.  Thus, the linking theorem can be replaced by the mountain pass theorem and we can also obtain a sequence $\{u_n\}\subset E_\lambda$ such that $(1+\|u_n\|_\lambda)J_{\alpha,\lambda}'(u_n)=o_n(1)$ strongly in $E_\lambda^*$ and
$J_{\alpha,\lambda}(u_n)=c_{\alpha,\lambda}+o_n(1)$.  In the case $a_0\geq0$, since $4<p<6$ and the fact that $\mathcal{D}_\lambda(u,u)=0$ in $E_\lambda$, by using a standard argument, we can verify that $J_{\alpha, \lambda}(u)$ has a mountain pass geometry in $E_\lambda$ for $\lambda>0$, that is,
\begin{enumerate}
\item[$(a)$] $\inf_{\mathbb{S}_{\lambda,\overline{\rho}}}J_{\alpha,\lambda}(u)\geq C$ for some $\overline{\rho}>0$;
\item[$(b)$] $J_{\alpha,\lambda}(\overline{R}_0\phi)\leq0$ for some $\overline{R}_0>\overline{\rho}$ and $\phi\in H_0^1(\Omega)$,
\end{enumerate}
which also gives the existence of a sequence $\{u_n\}\subset E_\lambda$ such that $(1+\|u_n\|_\lambda)J_{\alpha,\lambda}'(u_n)=o_n(1)$ strongly in $E_\lambda^*$ and
$J_{\alpha,\lambda}(u_n)=c_{\alpha,\lambda}+o_n(1)$ with $c_{\alpha,\lambda}\in [C_\alpha, C_\alpha']$, where $C_\alpha,C'_\alpha$ are two positive constants independent of $\lambda$.  In a word, in both cases of $a_0<0$ and $a_0\geq 0$, for $\lambda>\widetilde{\Lambda}_{k_0^*}$, there exists $\{u_n\}\subset E_\lambda$ such that $(1+\|u_n\|_\lambda)J_{\alpha,\lambda}'(u_n)=o_n(1)$ strongly in $E_\lambda^*$ and
$J_{\alpha,\lambda}(u_n)=c_{\alpha,\lambda}+o_n(1)$ with $c_{\alpha, \lambda}\in[C_\alpha, C'_\alpha]$.

\begin{lemma}\label{lem008}
Suppose that  the conditions $(A_1)$--$(A_3)$ hold.  For every $\alpha>0$, if either $a_0\geq0$ or $a_0<0$ with $\beta_{k_0^*-1}<1$ then $\{\|u_n\|_\lambda\}$ is bounded.
\end{lemma}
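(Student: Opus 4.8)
The plan is to extract boundedness of the Palais--Smale (or Cerami) sequence $\{u_n\}$ from a suitable linear combination of the energy identity $J_{\alpha,\lambda}(u_n)=c_{\alpha,\lambda}+o_n(1)$ and the derivative estimate $\langle J_{\alpha,\lambda}'(u_n),u_n\rangle = o_n(1)(1+\|u_n\|_\lambda)$. First I would compute, writing $t_n=\|\nabla u_n\|_{L^2(\bbr^3)}^2$ and $D_n=\mathcal{D}_\lambda(u_n,u_n)$,
\begin{eqnarray*}
J_{\alpha,\lambda}(u_n)-\frac1p\langle J_{\alpha,\lambda}'(u_n),u_n\rangle
&=&\alpha\Big(\frac14-\frac1p\Big)t_n^2+\Big(\frac12-\frac1p\Big)\big(\|u_n\|_\lambda^2-D_n\big).
\end{eqnarray*}
Here I use the explicit form $J_{\alpha,\lambda}(u)=\frac\alpha4 t^2+\frac12\|u\|_\lambda^2-\frac12 D+(-\tfrac1p)\|u\|_{L^p}^p$ and the fact that $\langle J_{\alpha,\lambda}'(u),u\rangle=\alpha t^2+\|u\|_\lambda^2-D-\|u\|_{L^p}^p$, so the $L^p$ terms cancel. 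Since $4<p<6$, the coefficients $\alpha(\frac14-\frac1p)$ and $\frac12-\frac1p$ are both positive, so the left-hand side is $c_{\alpha,\lambda}+o_n(1)+o_n(1)\|u_n\|_\lambda = O(1)+o_n(1)\|u_n\|_\lambda$. This already controls $t_n^2$ and, crucially, the indefinite quantity $\|u_n\|_\lambda^2-D_n$ up to the same $o_n(1)\|u_n\|_\lambda$ error.

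The main obstacle is that $\|u_n\|_\lambda^2-D_n$ is not itself coercive in $\|u_n\|_\lambda$ when $a_0<0$, because on the finite-dimensional piece $\bigoplus_{i=1}^{k_0^*-1}\mathcal{F}_{\lambda,i}^\perp$ it is negative (part (1) of Lemma~\ref{lem005}). To handle this I would split $u_n=v_n+w_n$ with $v_n\in\bigoplus_{i=1}^{k_0^*-1}\mathcal{F}_{\lambda,i}^\perp$ (a finite-dimensional space, by Remark~\ref{rmk001}) and $w_n\in\mathcal{F}_{\lambda,k_0^*}^{\perp,*}$. By part (2) of Lemma~\ref{lem005}, $\|w_n\|_\lambda^2-\mathcal{D}_\lambda(w_n,w_n)\geq\frac12(1-\frac1{\gamma_{k_0^*}})\|w_n\|_\lambda^2$, so the previous identity forces $\|w_n\|_\lambda^2\leq C+o_n(1)\|u_n\|_\lambda$. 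It remains to bound $\|v_n\|_\lambda$. On the finite-dimensional space all norms are equivalent (up to the $\lambda$-dependence, which is harmless since $\lambda$ is fixed here and the space converges to a fixed finite-dimensional subspace of $H_0^1(\Omega)$), so I would test $J_{\alpha,\lambda}'(u_n)$ against $v_n$: using $\mathcal{D}_\lambda(v_n,w_n)=0$ and $\|v_n\|_\lambda^2-\mathcal{D}_\lambda(v_n,v_n)\leq 0$, one gets
\begin{eqnarray*}
o_n(1)(1+\|u_n\|_\lambda)\geq \langle J_{\alpha,\lambda}'(u_n),v_n\rangle
= \alpha t_n\,\langle\nabla u_n,\nabla v_n\rangle_{L^2}+\big(\|v_n\|_\lambda^2-\mathcal{D}_\lambda(v_n,v_n)\big)-\int_{\bbr^3}|u_n|^{p-2}u_n v_n\,dx,
\end{eqnarray*}
which rearranges to give a lower bound on $\int |u_n|^{p-2}u_n v_n\,dx$ in terms of $\|v_n\|_\lambda$, $t_n$ and the controlled error. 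Combined with Hölder's inequality $|\int |u_n|^{p-2}u_n v_n\,dx|\leq \|u_n\|_{L^p}^{p-1}\|v_n\|_{L^p}$ and the embedding \eqref{eq0001}, plus the fact that $\|u_n\|_{L^p}^p$ can be expressed via $\langle J_{\alpha,\lambda}'(u_n),u_n\rangle$ and hence is $O(1+t_n^2+\|u_n\|_\lambda^2)$, one closes a system of inequalities in the quantities $t_n$, $\|v_n\|_\lambda$, $\|w_n\|_\lambda$ that, because every ``bad'' term carries a power strictly below the leading one (here the gain comes precisely from $p>4$, which makes $J-\frac1p\langle J',\cdot\rangle$ control $t_n^2$ while $\|u_n\|_{L^p}^p$ grows only like $\|u_n\|_\lambda^{p}$ with no help — so one must be careful and instead use that the quadratic-in-$v_n$ term on $\bigoplus_{i=1}^{k_0^*-1}\mathcal{F}_{\lambda,i}^\perp$ is bounded by a fixed multiple of $\|v_n\|_\lambda^2$ and absorb it). A standard contradiction argument finishes: if $\|u_n\|_\lambda\to\infty$, divide through by $\|u_n\|_\lambda^2$ (or an appropriate power), pass to the limit in the normalized sequence, and derive that the limit must be both nontrivial and trivial.

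In the simpler case $a_0\geq0$, the term $\mathcal{D}_\lambda(u,u)\equiv 0$ on $E_\lambda$, so the first identity reads $\alpha(\frac14-\frac1p)t_n^2+(\frac12-\frac1p)\|u_n\|_\lambda^2=c_{\alpha,\lambda}+o_n(1)(1+\|u_n\|_\lambda)$ with both coefficients positive, and boundedness is immediate. Thus the only real work is the indefinite case $a_0<0$, and the heart of it is controlling the finite-dimensional component $v_n$; I expect that step — rather than the Cerami manipulation — to be where the hypothesis $\gamma_{k_0^*-1}<1$ and Lemma~\ref{lem005} are genuinely used.
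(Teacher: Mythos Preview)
Your strategy is workable in spirit but much heavier than what the paper actually does, and it has a couple of soft spots. The paper's proof never invokes the spectral decomposition of Lemma~\ref{lem005} at all. Instead it observes directly that, by $(A_2)$ and the H\"older--Sobolev inequality,
\[
\mathcal{D}_\lambda(u_n,u_n)\le |a_0|\int_{\mathcal{A}_\infty}u_n^2\,dx\le |a_0|\,|\mathcal{A}_\infty|^{2/3}S^{-1}\,\|\nabla u_n\|_{L^2}^2,
\]
so the ``bad'' term in the identity $J_{\alpha,\lambda}(u_n)-\tfrac1p\langle J_{\alpha,\lambda}'(u_n),u_n\rangle$ is only quadratic in $\|\nabla u_n\|_{L^2}$. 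Since $p>4$ makes the Kirchhoff contribution $\alpha(\tfrac14-\tfrac1p)\|\nabla u_n\|_{L^2}^4$ strictly positive and quartic, Young's inequality $C\|\nabla u_n\|_{L^2}^2\le \tfrac12\alpha(\tfrac14-\tfrac1p)\|\nabla u_n\|_{L^2}^4+C_\alpha$ absorbs it completely, leaving
\[
c_{\alpha,\lambda}+o_n(1)\ge \tfrac{p-4}{8p}\big(\alpha\|\nabla u_n\|_{L^2}^4+\|u_n\|_\lambda^2\big)-C_\alpha',
\]
and boundedness is immediate. The hypothesis $\gamma_{k_0^*-1}<1$ is used only to produce the Cerami sequence via the linking geometry, not in the boundedness argument itself; your expectation that Lemma~\ref{lem005} is ``genuinely used'' here is misplaced.

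Regarding your route: the claim that the first identity ``already controls $t_n^2$'' is premature, since $\|u_n\|_\lambda^2-D_n$ may be arbitrarily negative and could in principle cancel $t_n^2$; you correctly flag this as the obstacle in the next paragraph, but the phrasing is misleading. More seriously, your two-piece split $u_n=v_n+w_n$ omits the component in $\mathcal{F}_\lambda$ (where $\mathcal{D}_\lambda$ vanishes but Lemma~\ref{lem005}(2) does not apply as stated), and the assertion $\mathcal{D}_\lambda(v_n,w_n)=0$ requires the eigenvector orthogonality in the $\mathcal{D}_\lambda$ form, which the paper never establishes and which does not follow from $\langle\cdot,\cdot\rangle_\lambda$-orthogonality alone. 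Finally, your scheme for bounding $\|v_n\|_\lambda$ by testing against $v_n$ is only sketched and does not obviously close: the nonlinear term $\int|u_n|^{p-2}u_n v_n$ mixes $v_n$ and $w_n$ in a way that your inequalities do not disentangle. All of this can probably be repaired, but the paper's one-line estimate on $\mathcal{D}_\lambda$ makes the entire detour unnecessary.
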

\begin{proof}
Since $\lambda>\widetilde{\Lambda}_{k_0^*}$, by the condition $(A_2)$ and the H\"older and the Sobolev inequalities, we obtain that
\begin{eqnarray*}
\mathcal{D}_\lambda(u_n,u_n)\leq|a_0|\int_{\mathcal{A}_\infty}|u_n|^2dx\leq|a_0||\mathcal{A}_\infty|^{\frac{2}{3}}S^{-1}\|\nabla u_n\|_{L^2(\bbr^3)}^2.
\end{eqnarray*}
Note that $(1+\|u_n\|_\lambda)J_{\alpha,\lambda}'(u_n)=o_n(1)$ strongly in $E_\lambda^*$ and
$J_{\alpha,\lambda}(u_n)=c_{\alpha,\lambda}+o_n(1)$, by the Young inequality and the fact that $4<p<6$, we deduce that
\begin{eqnarray*}
c_{\alpha,\lambda}+o_n(1)&=&J_{\alpha,\lambda}(u_n)-\frac1p\langle J_{\alpha,\lambda}'(u_n), u_n\rangle_{E_\lambda^*, E_\lambda}\\
&=&\alpha(\frac{1}{4}-\frac1p)\|\nabla u_n\|_{L^2(\bbr^3)}^4+(\frac12-\frac1p)\|u_n\|_\lambda^2-(\frac12-\frac1p)\mathcal{D}_\lambda(u_n,u_n)\\
&\geq&\frac{p-4}{4p}(\alpha\|\nabla u_n\|_{L^2(\bbr^3)}^4+\|u_n\|_\lambda^2)-\frac{p-2}{2p}|a_0||\mathcal{A}_\infty|^{\frac{2}{3}}S^{-1}\|\nabla u_n\|_{L^2(\bbr^3)}^2\\
&\geq&\frac{p-4}{8p}(\alpha\|\nabla u_n\|_{L^2(\bbr^3)}^4+\|u_n\|_\lambda^2)-\frac{2(p-2)^2}{\alpha(p-4)p}|a_0|^2|\mathcal{A}_\infty|^{\frac{4}{3}}S^{-2},
\end{eqnarray*}
where $\langle \cdot,\cdot\rangle_{E_{\lambda}^*, E_{\lambda}}$ is the duality pairing of $E_{\lambda}^*$ and $E_{\lambda}$.
The preceding inequality, together with $c_{\alpha,\lambda}\in[C_\alpha, C'_\alpha]$ and $4<p<6$, implies $\{\|u_n\|_\lambda\}$ is bounded.
\end{proof}

By Lemma~\ref{lem008}, we can see that $u_n=u_{\alpha,\lambda}+o_n(1)$ weakly in $E_\lambda$ for some $u_{\alpha,\lambda}\in E_\lambda$ up to a subsequence.  Without loss of generality, we may assume that $u_n=u_{\alpha,\lambda}+o_n(1)$ weakly in $E_\lambda$.

\begin{lemma}\label{lem009}
Suppose that the conditions $(A_1)$--$(A_3)$ hold.  For every $\alpha>0$, if either $a_0\geq0$ or $a_0<0$ with $\beta_{k_0^*-1}<1$ then there exists $\overline{\Lambda}_{k_0^*}>\widetilde{\Lambda}_{k_0^*}$ such that $u_{\alpha,\lambda}$ is a nontrivial solution of $(\mathcal{P}_{\alpha,\lambda})$ for $\lambda>\overline{\Lambda}_{k_0^*}$.
\end{lemma}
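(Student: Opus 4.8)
The plan is to show that, for $\lambda$ large, the weak convergence $u_n\rightharpoonup u_{\alpha,\lambda}$ in $E_\lambda$ can be upgraded to strong convergence in $E_\lambda$; once this is done both assertions are immediate, since $J_{\alpha,\lambda}\in C^2(E_\lambda)$ then forces $J_{\alpha,\lambda}'(u_{\alpha,\lambda})=\lim_n J_{\alpha,\lambda}'(u_n)=0$ in $E_\lambda^*$ (so $u_{\alpha,\lambda}$ solves $(\mathcal{P}_{\alpha,\lambda})$) and $J_{\alpha,\lambda}(u_{\alpha,\lambda})=\lim_n J_{\alpha,\lambda}(u_n)=c_{\alpha,\lambda}\ge C_\alpha>0=J_{\alpha,\lambda}(0)$ (so $u_{\alpha,\lambda}\not\equiv0$). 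I would set $v_n:=u_n-u_{\alpha,\lambda}\rightharpoonup0$ in $E_\lambda$; along a subsequence $v_n\to0$ a.e. in $\bbr^3$ and in $L^q_{loc}(\bbr^3)$ for $2\le q<6$, and since $(\lambda a(x)+a_0)^-$ is bounded by $|a_0|$ and supported in the finite--measure set $\mathcal{A}_\lambda$, the Rellich theorem together with the uniform bound $\|v_n\|_{L^6(\bbr^3)}\le C$ yields $\int_{\mathcal{A}_\lambda}v_n^2dx=o_n(1)$, hence $\mathcal{D}_\lambda(u_n,u_n)=\mathcal{D}_\lambda(u_{\alpha,\lambda},u_{\alpha,\lambda})+o_n(1)$ and $\mathcal{D}_\lambda(u_n,v_n)=o_n(1)$. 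Because $\langle\cdot,\cdot\rangle_\lambda$ is a Hilbert inner product and by the Brezis--Lieb lemma,
\[
\|\nabla u_n\|_{L^2(\bbr^3)}^2=\|\nabla v_n\|_{L^2(\bbr^3)}^2+\|\nabla u_{\alpha,\lambda}\|_{L^2(\bbr^3)}^2+o_n(1),\quad \|u_n\|_\lambda^2=\|v_n\|_\lambda^2+\|u_{\alpha,\lambda}\|_\lambda^2+o_n(1),
\]
\[
\|u_n\|_{L^p(\bbr^3)}^p=\|v_n\|_{L^p(\bbr^3)}^p+\|u_{\alpha,\lambda}\|_{L^p(\bbr^3)}^p+o_n(1),
\]
so, passing to a further subsequence, $\ell:=\lim_n\|\nabla v_n\|_{L^2(\bbr^3)}^2$, $s:=\lim_n\|v_n\|_\lambda^2$ and $q:=\lim_n\|v_n\|_{L^p(\bbr^3)}^p$ all exist, with $0\le\ell\le s$ and $A:=\lim_n\|\nabla u_n\|_{L^2(\bbr^3)}^2=\ell+\|\nabla u_{\alpha,\lambda}\|_{L^2(\bbr^3)}^2$.

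Next I would extract the algebraic relation governing $q$. Since $\|J_{\alpha,\lambda}'(u_n)\|_{E_\lambda^*}=o_n(1)$ and $\{\|v_n\|_\lambda\}$ is bounded, $\langle J_{\alpha,\lambda}'(u_n),v_n\rangle_{E_\lambda^*,E_\lambda}=o_n(1)$. Expanding this, and using $\langle u_{\alpha,\lambda},v_n\rangle_\lambda=o_n(1)$, the splittings above, $\mathcal{D}_\lambda(u_n,v_n)=o_n(1)$, and the weak convergence $|u_n|^{p-2}u_n\rightharpoonup|u_{\alpha,\lambda}|^{p-2}u_{\alpha,\lambda}$ in $L^{p/(p-1)}(\bbr^3)$ (pointwise convergence plus the uniform $L^{p/(p-1)}$--bound), a short computation gives $\int_{\bbr^3}\nabla u_n\nabla v_ndx\to\ell$, $\int_{\bbr^3}(\lambda a(x)+a_0)u_nv_ndx\to s-\ell$ and $\int_{\bbr^3}|u_n|^{p-2}u_nv_ndx\to q$, whence
\[
q=(\alpha A+1)\ell+(s-\ell)=\alpha A\ell+s\ \ge\ s .
\]

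The decisive step combines a $\lambda$--independent a priori bound with a steep--well tail estimate. Exactly as in the proof of Lemma~\ref{lem008}, $J_{\alpha,\lambda}(u_n)-\tfrac1p\langle J_{\alpha,\lambda}'(u_n),u_n\rangle_{E_\lambda^*,E_\lambda}\ge\tfrac{p-4}{8p}\|u_n\|_\lambda^2-\tfrac{2(p-2)^2}{\alpha(p-4)p}|a_0|^2|\mathcal{A}_\infty|^{4/3}S^{-2}$; since $c_{\alpha,\lambda}\le C_\alpha'$ with $C_\alpha'$ independent of $\lambda$, this produces a constant $C_*(\alpha)>0$, independent of $\lambda$, with $\|u_n\|_\lambda^2\le C_*(\alpha)$ for $n$ large, and hence, by the splitting for $\|u_n\|_\lambda^2$, $s\le C_*(\alpha)$. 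On the other hand, splitting $\bbr^3=B_R\cup(B_R^c\cap\mathcal{A}_\infty)\cup(B_R^c\setminus\mathcal{A}_\infty)$, using the Rellich theorem on $B_R$, the H\"older and Sobolev inequalities on $B_R^c\cap\mathcal{A}_\infty$, and $(\lambda a(x)+a_0)^+\ge a_0+a_\infty\lambda$ off $\mathcal{A}_\infty$, one gets (letting $n\to\infty$, then $R\to\infty$, and recalling $|\mathcal{A}_\infty|<+\infty$)
\[
\limsup_{n\to\infty}\|v_n\|_{L^2(\bbr^3)}^2\le\frac{s}{a_0+a_\infty\lambda}.
\]
Interpolating $L^p(\bbr^3)$ between $L^2(\bbr^3)$ and $L^6(\bbr^3)$, using $\|v_n\|_{L^6(\bbr^3)}^2\le S^{-1}\|\nabla v_n\|_{L^2(\bbr^3)}^2$ and $\ell\le s$, I obtain $q\le S^{-3(p-2)/4}\,s^{p/2}(a_0+a_\infty\lambda)^{-(6-p)/4}$. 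Combining this with $q\ge s$ and $s\le C_*(\alpha)$: if $s>0$ then $1\le S^{-3(p-2)/4}C_*(\alpha)^{(p-2)/2}(a_0+a_\infty\lambda)^{-(6-p)/4}$, whose right--hand side tends to $0$ as $\lambda\to+\infty$ because $p<6$. Hence there is $\overline{\Lambda}_{k_0^*}>\widetilde{\Lambda}_{k_0^*}$ such that $s=0$ for all $\lambda>\overline{\Lambda}_{k_0^*}$, i.e. $u_n\to u_{\alpha,\lambda}$ strongly in $E_\lambda$, and the lemma follows as explained in the first paragraph.

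The step I expect to be the main obstacle is precisely this last one: for fixed $\lambda$ there is no compact embedding $E_\lambda\hookrightarrow L^2(\bbr^3)$, so the tail estimate only yields $\limsup_n\|v_n\|_{L^2(\bbr^3)}^2\le s/(a_0+a_\infty\lambda)$ rather than $o_n(1)$, and the argument can be closed only because the bound $s\le C_*(\alpha)$ is uniform in $\lambda$. That uniformity in turn rests essentially on $4<p<6$ and on $\{u_n\}$ being a Cerami (not merely a Palais--Smale) sequence at a level bounded independently of $\lambda$; in the definite case $a_0\ge0$ everything simplifies because $\mathcal{D}_\lambda\equiv0$, but the structure of the proof is unchanged.
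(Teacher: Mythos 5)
Your proposal is correct and follows essentially the same route as the paper: the decisive ingredients — the interpolation of $L^p$ between $L^2$ and $L^6$, the steep-well tail bound producing the factor $(a_0+a_\infty\lambda)^{-(6-p)/4}$, the $\lambda$-independent bound on $\|u_n\|_\lambda$ from the Cerami level $c_{\alpha,\lambda}\in[C_\alpha,C'_\alpha]$, and taking $\lambda$ large to force strong convergence in $E_\lambda$ — are exactly those of the paper's proof. The only differences are organizational: the paper first rules out $u_{\alpha,\lambda}=0$ by contradiction and then handles the nonlocal term by freezing the coefficient $\alpha A$ in an auxiliary functional $I_{\alpha,\lambda}$ and testing $\langle I_{\alpha,\lambda}'(u_n)-I_{\alpha,\lambda}'(u_{\alpha,\lambda}),u_n-u_{\alpha,\lambda}\rangle$, whereas you run a single Brezis--Lieb/strong-convergence argument (with the relation $q=\alpha A\ell+s$) and obtain nontriviality from $J_{\alpha,\lambda}(u_{\alpha,\lambda})=c_{\alpha,\lambda}\geq C_\alpha>0$; both variants are equivalent in substance.
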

\begin{proof}
We first prove that $u_{\alpha,\lambda}\not=0$ in $E_\lambda$.  Indeed, suppose on the contrary, then by the Sobolev embedding theorem, we can see that $u_n=o_n(1)$ strongly in $L^2_{loc}(\bbr^3)$, which, together with the condition $(A_2)$, implies $u_n=o_n(1)$ strongly in $L^2(\mathcal{A}_\infty)$.  It follows from Lemma~\ref{lem008}, the conditions $(A_1)$--$(A_2)$ and the H\"older and the Sobolev inequality that
\begin{eqnarray}
\int_{\bbr^3}|u_n|^pdx&\leq&\bigg(\int_{\bbr^3}|u_n|^2dx\bigg)^{\frac{6-p}{4}}\bigg(\int_{\bbr^3}|u_n|^6dx\bigg)^{\frac{p-2}{4}}\notag\\
&\leq&S^{-\frac{3(p-2)}{4}}\|\nabla u_n\|_{L^2(\bbr^3)}^{\frac{3(p-2)}{2}}\bigg(\int_{\bbr^3\backslash\mathcal{A}_\infty}|u_n|^2dx+o_n(1)\bigg)^{\frac{6-p}{4}}\notag\\
&\leq&S^{-\frac{3(p-2)}{4}}(C_{1}+o_n(1))^{\frac{5p-10}{4}}\bigg(\frac{1}{a_0+a_\infty\lambda}\bigg)^{\frac{6-p}{4}}\|u_n\|_\lambda^2+o_n(1).\label{eq8888}
\end{eqnarray}
On the other hand, by the conditions $(A_1)$--$(A_2)$ once more, we have
\begin{eqnarray}
\mathcal{D}_\lambda(u_n, u_n)\leq|a_0|\int_{\mathcal{A}_\infty}|u_n|^2dx=o_n(1).\label{eq8889}
\end{eqnarray}
Therefore, we deduce from the fact that $(1+\|u_n\|_\lambda)J_{\alpha,\lambda}'(u_n)=o_n(1)$ strongly in $E_\lambda^*$ that
\begin{eqnarray*}
\alpha\|\nabla u_n\|_{L^2(\bbr^2)}^4+\|u_n\|_\lambda^2\leq S^{-3(p-2)}(C_{1}+o_n(1))^{\frac{5p-10}{4}}
\bigg(\frac{1}{a_0+a_\infty\lambda}\bigg)^{\frac{6-p}{4}}\|u_n\|_\lambda^2+o_n(1),
\end{eqnarray*}
which yields that there exists $\overline{\Lambda}_{k_0^*}>\widetilde{\Lambda}_{k_0^*}$ dependent of $\alpha$ such that $u_n=o_n(1)$ strongly in $E_\lambda$ with $\lambda>\overline{\Lambda}_{k_0^*}$.  It is impossible since $c_{\alpha,\lambda}\geq C_\alpha>0$ for all $\lambda>\widetilde{\Lambda}_{k_0^*}$.  Therefore $u_{\alpha,\lambda}\not=0$ in $E_\lambda$. It remains to show that $J_{\alpha,\beta}'(u_{\alpha,\beta})=0$ in $E_\lambda^*$.  In fact, without loss of generality, we may assume that $\|u_n\|_{L^2(\bbr^3)}^2=A+o_n(1)$ and consider the following energy functional
\begin{equation*}
I_{\alpha, \lambda}(u)=\frac{\alpha A}{2}\|u\|_{L^2(\bbr^3)}^2+\frac12\|u\|_\lambda^2-\frac12 \mathcal{D}_\lambda(u, u)-\frac1p\|u\|_{L^p(\bbr^3)}^p.
\end{equation*}
Clearly, by \eqref{eq0001}, $I_{\alpha, \lambda}(u)$ is of $C^2$ in $E_\lambda$ for $\lambda>\overline{\Lambda}_{k_0^*}$.  Since $(1+\|u_n\|_\lambda)J_{\alpha,\lambda}'(u_n)=o_n(1)$ strongly in $E_\lambda^*$, it is easy to see from $\|u_n\|_\lambda^2=A+o_n(1)$ and $u_n=u_{\alpha,\lambda}+o_n(1)$ weakly in $E_\lambda$ that $\langle I_{\alpha, \lambda}'(u_n), u_n-u_{\alpha,\beta}\rangle_{E_\lambda^*, E_\lambda}=o_n(1)$ and $I_{\alpha, \lambda}'(u_n)=o_n(1)$ strongly in $E_\lambda^*$, so that $I_{\alpha, \lambda}'(u_{\alpha, \lambda})=0$ in $E_\lambda^*$.  In particular, $\langle I_{\alpha, \lambda}'(u_{\alpha, \lambda}), u_n-u_{\alpha,\beta}\rangle_{E_\lambda^*, E_\lambda}=0$.  Now, we can obtain that
\begin{eqnarray*}
o_n(1)&=&\langle I_{\alpha, \lambda}'(u_n)-I_{\alpha, \lambda}'(u_{\alpha, \lambda}), u_n-u_{\alpha,\beta}\rangle_{E_\lambda^*, E_\lambda}\\
&=&\alpha A\|u_n-u_{\alpha,\beta}\|_{L^2(\bbr^3)}^2+\|u_n-u_{\alpha,\beta}\|_\lambda^2-\mathcal{D}_\lambda(u_n-u_{\alpha,\beta}, u_n-u_{\alpha,\beta})-\|u_n-u_{\alpha,\beta}\|_{L^p(\bbr^3)}^p.
\end{eqnarray*}
Since $u_n-u_{\alpha,\beta}=o_n(1)$ weakly in $E_\lambda$, by using similar arguments in the proofs of \eqref{eq8888} and \eqref{eq8889}, we can see that $u_n-u_{\alpha,\beta}=o_n(1)$ strongly in $E_\lambda$ for $\lambda$ sufficiently large, say $\lambda>\overline{\Lambda}_{k_0^*}$.  Thus, we must have that $J_{\alpha,\beta}'(u_{\alpha,\beta})=0$ in $E_\lambda^*$ for $\lambda>\overline{\Lambda}_{k_0^*}$.
\end{proof}

The following lemma will give a description on the concentration behavior of the nontrivial solutions $u_{\alpha,\lambda}$ as $\lambda\to+\infty$.
\begin{lemma}\label{lem010}
Suppose that the conditions $(A_1)$--$(A_3)$ hold.  For every $\alpha>0$, if either $a_0\geq0$ or $a_0<0$ with $\beta_{k_0^*-1}<1$ then we have  $u_{\alpha,\lambda}\to u_{\alpha}$ strongly in $H^1(\bbr^3)$ as $\lambda\to+\infty$ up to a subsequence.  Furthermore, $u_\alpha$ is a nontrivial solution of $(\mathcal{P}_\alpha^*)$.
\end{lemma}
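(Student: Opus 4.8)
The plan is to run the standard steep-potential-well concentration argument, using the uniform bounds already in hand from the previous lemmas. First I would show that the family $\{u_{\alpha,\lambda}\}_{\lambda>\overline{\Lambda}_{k_0^*}}$ is bounded in $\h$ uniformly in $\lambda$. This should follow from the energy estimate in the proof of Lemma~\ref{lem008}: since $c_{\alpha,\lambda}\in[C_\alpha,C_\alpha']$ with $C_\alpha,C_\alpha'$ independent of $\lambda$, the inequality there gives $\alpha\|\nabla u_{\alpha,\lambda}\|_{L^2(\bbr^3)}^4+\|u_{\alpha,\lambda}\|_\lambda^2\leq C$ uniformly in $\lambda$; combining with \eqref{eq0001} (note $d_\lambda$ is bounded for $\lambda>\overline{\Lambda}_{k_0^*}$) yields the $\h$-bound. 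Hence, up to a subsequence, $u_{\alpha,\lambda}\rightharpoonup u_\alpha$ weakly in $\h$, strongly in $L^q_{loc}(\bbr^3)$ for $2\leq q<6$, and a.e. in $\bbr^3$, and $\|\nabla u_{\alpha,\lambda}\|_{L^2(\bbr^3)}^2\to B$ for some $B\geq0$.

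Next I would identify the limit as supported in $\Omega$. From $\|u_{\alpha,\lambda}\|_\lambda^2\leq C$ we get $\lambda\int_{\bbr^3}a(x)u_{\alpha,\lambda}^2\,dx\leq C+|a_0|\int_{\mathcal{A}_\infty}u_{\alpha,\lambda}^2\,dx\leq C$, so $\int_{\bbr^3}a(x)u_{\alpha,\lambda}^2\,dx=o_\lambda(1)$; by Fatou and $(A_3)$ this forces $\int_{\bbr^3}a(x)u_\alpha^2\,dx=0$, i.e. $u_\alpha=0$ a.e. outside $\overline{\Omega}$, and a Brezis–Lieb/splitting argument on $\mathcal{A}_\infty$ (using $(A_2)$ and the $L^6$-bound exactly as in \eqref{eq8888}) gives $\int_{\bbr^3\setminus\Omega}u_{\alpha,\lambda}^2\,dx=o_\lambda(1)$, whence $u_{\alpha,\lambda}\to u_\alpha$ strongly in $L^2(\bbr^3)$, hence (interpolating with the uniform $L^6$-bound) strongly in $L^p(\bbr^3)$. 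One also checks $u_\alpha\in H_0^1(\Omega)$ from $u_\alpha\in\h$ with $u_\alpha\equiv0$ outside $\Omega$ together with the smoothness of $\partial\Omega$ in $(A_3)$.

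Then I would pass to the limit in the equation. Testing $J_{\alpha,\lambda}'(u_{\alpha,\lambda})=0$ against a fixed $\varphi\in C_c^\infty(\Omega)$: the nonlocal coefficient converges, $\big(\alpha\|\nabla u_{\alpha,\lambda}\|_{L^2}^2+1\big)\to(\alpha B+1)$, the term $\int(\lambda a(x)+a_0)u_{\alpha,\lambda}\varphi\,dx=\int_\Omega a_0 u_{\alpha,\lambda}\varphi\,dx$ since $\mathrm{supp}\,\varphi\subset\Omega=a^{-1}(0)$, and the $L^p$-term passes by the strong $L^p$-convergence; so $u_\alpha$ solves $-(\alpha B+1)\Delta u_\alpha+a_0u_\alpha=|u_\alpha|^{p-2}u_\alpha$ in $\Omega$ with $u_\alpha\in H_0^1(\Omega)$. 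To conclude I must upgrade the convergence to strong in $\h$ and identify $B=\|\nabla u_\alpha\|_{L^2(\bbr^3)}^2$. For this I test both $J_{\alpha,\lambda}'(u_{\alpha,\lambda})=0$ and the limit equation against $u_{\alpha,\lambda}-u_\alpha$ and subtract; the $L^p$-difference is $o_\lambda(1)$ by strong $L^p$-convergence, the $\mathcal{D}_\lambda$ and $a_0$-terms are controlled on $\mathcal{A}_\infty$ as in \eqref{eq8889}, and one gets $(\alpha B'+1)\|\nabla(u_{\alpha,\lambda}-u_\alpha)\|_{L^2}^2+\lambda\int a(x)(u_{\alpha,\lambda}-u_\alpha)^2=o_\lambda(1)$ where $B'$ is a limit of convex combinations of $\|\nabla u_{\alpha,\lambda}\|_{L^2}^2$ and $\|\nabla u_\alpha\|_{L^2}^2$; since $\alpha B'+1\geq1$, this gives $\nabla u_{\alpha,\lambda}\to\nabla u_\alpha$ in $L^2(\bbr^3)$, hence $B=\|\nabla u_\alpha\|_{L^2(\bbr^3)}^2$ and $u_{\alpha,\lambda}\to u_\alpha$ strongly in $\h$, so $u_\alpha$ solves exactly $(\mathcal{P}_\alpha^*)$. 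Finally, strong convergence together with $c_{\alpha,\lambda}\geq C_\alpha>0$ gives $u_\alpha\neq0$. The main obstacle is the circular dependence between the strong $\h$-convergence and the identification of the nonlocal coefficient $B$; this is resolved precisely by the $u_{\alpha,\lambda}-u_\alpha$ testing step above, where the coercivity constant $\alpha B'+1\geq 1$ is bounded below independently of $\lambda$, so no smallness of $\alpha$ is needed here.
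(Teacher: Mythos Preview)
Your plan is correct and follows essentially the same steep-potential-well concentration argument as the paper: uniform $\lambda$-independent bound from the estimate in Lemma~\ref{lem008}, identification of the weak limit as supported in $\overline{\Omega}$ via $\int a(x)u_{\alpha,\lambda}^2dx=o_\lambda(1)$, strong $L^2$ (hence $L^p$) convergence using $(A_2)$, passage to the limit in the equation, and upgrade to strong $H^1$ convergence which simultaneously identifies the nonlocal coefficient.

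The only difference is in how the strong $D^{1,2}$ convergence is obtained. The paper uses a norm-comparison argument: from $\langle J_{\alpha,\lambda_n}'(u_{\alpha,\lambda_n}),u_{\alpha,\lambda_n}\rangle=0$ and strong $L^p$ convergence it writes $\int_\Omega|u_\alpha|^p$ as a limit of $\alpha\|\nabla u_{\alpha,\lambda_n}\|^4+\|u_{\alpha,\lambda_n}\|_{\lambda_n}^2-\mathcal{D}_{\lambda_n}$, bounds this below by weak lower semicontinuity, and then matches with the Nehari identity for the limit $u_\alpha$ to force equality throughout. Your difference-testing route is equally standard, but note one technical adjustment: you cannot literally test the limit equation posed in $H_0^1(\Omega)$ against $u_{\alpha,\lambda}-u_\alpha$, since this function is not in $H_0^1(\Omega)$. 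The clean fix is to test only $J_{\alpha,\lambda}'(u_{\alpha,\lambda})=0$ against $u_{\alpha,\lambda}$ and against $u_\alpha\in H_0^1(\Omega)\subset E_\lambda$ separately (using $a(x)u_\alpha\equiv0$), subtract, and use weak convergence to replace $\int\nabla u_{\alpha,\lambda}\cdot\nabla(u_{\alpha,\lambda}-u_\alpha)$ by $\|\nabla(u_{\alpha,\lambda}-u_\alpha)\|_{L^2}^2+o_\lambda(1)$; the coercivity constant $\alpha\|\nabla u_{\alpha,\lambda}\|_{L^2}^2+1\geq1$ then gives $\|\nabla(u_{\alpha,\lambda}-u_\alpha)\|_{L^2}^2+\lambda\int a(x)u_{\alpha,\lambda}^2\,dx=o_\lambda(1)$ directly, without ever invoking the limit problem as a test identity.
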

\begin{proof}
Let $u_{\alpha,\lambda_n}$ be the nontrivial solution obtained in Lemma~\ref{lem009} with $\lambda_n\to+\infty$ as $n\to\infty$.  By Lemma~\ref{lem008}, we can see that
\begin{eqnarray*}
\int_{\bbr^3}(|\nabla u_{\alpha, \lambda_n}|^2+(\lambda_n a(x)+a_0)^+|u_{\alpha,\lambda_n}|^2)dx\leq C_1\quad\text{for all }n\in\bbn.
\end{eqnarray*}
It follows that $\{u_{\alpha, \lambda_n}\}$ is bounded in $D^{1,2}(\bbr^3)$ for $n$ and
\begin{eqnarray*}
\int_{\bbr^3}(a(x)+\frac{a_0}{\lambda_n})^+|u_{\alpha,\lambda_n}|^2dx=o_n(1).
\end{eqnarray*}
Without loss of generality, we may assume that $u_{\alpha, \lambda_n}=u_\alpha+o_n(1)$ weakly in $D^{1,2}(\bbr^3)$.  Thanks to the Sobolev embedding theorem and the conditions $(A_1)$--$(A_3)$, we can see that $u_{\alpha, \lambda_n}=u_\alpha+o_n(1)$ strongly in $L^2(\bbr^3)$ and $u_\alpha\in H_0^1(\Omega)$ with $u_\alpha\equiv0$ on $\bbr^3\backslash\Omega$.  Therefore, by the H\"older and the Sobolev inequality, we get
\begin{eqnarray*}
\|u_{\alpha, \lambda_n}-u_\alpha\|_{L^p(\bbr^3)}\leq\|u_{\alpha, \lambda_n}-u_\alpha\|_{L^2(\bbr^3)}^{\frac{6-p}{2p}}(\|u_{\alpha, \lambda_n}\|_{L^6(\bbr^3)}+\|u_{\alpha}\|_{L^6(\bbr^3)})^{\frac{3p-6}{2p}}=o_n(1).
\end{eqnarray*}
On the other hand, by a variant of the Lebesgue dominated convergence theorem (cf. \cite[Theorem~2.2]{PK74}) and the condition $(A_1)$, we also have $\mathcal{D}_{\lambda_n}(u_{\alpha, \lambda_n}-u_\alpha, u_{\alpha, \lambda_n}-u_\alpha)=o_n(1)$.  Therefore,
\begin{eqnarray*}
\int_{\Omega}|u_\alpha|^pdx&=&\|u_{\alpha, \lambda_n}\|_{L^p(\bbr^3)}^p+o_n(1)\\
&=&\mathcal{D}_{\lambda_n}(u_{\alpha, \lambda_n}, u_{\alpha, \lambda_n})+\|u_{\alpha, \lambda_n}\|_{\lambda_n}^2+\alpha\|\nabla u_{\alpha, \lambda_n}\|_{L^2(\bbr^3)}^4\\
&\geq&\int_{\Omega}\alpha|\nabla u_\alpha|^4+|\nabla u_\alpha|^2+a_0|u_\alpha|^2dx+o_n(1).
\end{eqnarray*}
Note that $u_\alpha\in H_0^1(\Omega)\subset\h$, it is easy to see from $J_{\alpha,\lambda_n}'(u_{\alpha, \lambda_n})=0$ in $E_{\lambda_n}^*$ that $u_\alpha$ is a solution of $(\mathcal{P}_\alpha^*)$.  In particular,
\begin{eqnarray*}
\int_{\Omega}\alpha|\nabla u_\alpha|^4+|\nabla u_\alpha|^2+a_0|u_\alpha|^2dx=\int_{\Omega}|u_\alpha|^pdx.
\end{eqnarray*}
Thus, $u_{\alpha, \lambda_n}=u_\alpha+o_n(1)$ strongly in $D^{1,2}(\bbr^3)$ and
\begin{eqnarray*}
\int_{\bbr^3}\lambda_na(x)u_{\alpha,\lambda_n}^2dx=o_n(1).
\end{eqnarray*}
It follows that $u_{\alpha, \lambda_n}=u_\alpha+o_n(1)$ strongly in $\h$.  Thanks to $c_{\alpha, \lambda}\geq C_\alpha>0$, $u_{\alpha}$ must be nonzero.  Hence, $u_\alpha$ is a nontrivial solution of $(\mathcal{P}_\alpha^*)$.
\end{proof}

Now, we can give the proof of Theorem~\ref{thm001}.

\medskip\par\noindent{\bf Proof of Theorem~\ref{thm001}:}\quad  It follows immediately from Lemmas~\ref{lem009} and \ref{lem010}.
\qquad\raisebox{-0.5mm}{\rule{1.5mm}{4mm}}\vspace{6pt}

\section{Acknowledgements}
Y. Wu is supported by the Fundamental Research Funds for the Central Universities (2014QNA67).

\end{document}